\documentclass[11pt]{article}

\usepackage{amsthm}
\usepackage{amssymb}
\usepackage{hyperref}
\usepackage{mathrsfs}
\usepackage{color}
\usepackage{amsmath}
\usepackage{diagbox}

\def\sttf2#1#2{\left[\!\!\left[#1\atop#2\right]\!\!\right]}
\def\stf3f#1#2{\left[\!\!\left[\!\!\left[#1\atop#2\right]\!\!\right]\!\!\right]}
\def\stff4#1#2{\left[\!\!\left[\!\!\left[\!\!\left[#1\atop#2\right]\!\!\right]\!\!\right]\!\!\right]}
\def\stss2#1#2{\left\{\!\!\left\{#1\atop#2\right\}\!\!\right\}}

\newtheorem{theorem}{Theorem}

\newtheorem{Cor}{Corollary}
\newtheorem{Lem}{Lemma}

\allowdisplaybreaks

\begin{document}

\title{Tribonacci properties of identities, matrices, and determinants}

\author{
Takao Komatsu
\\
\small Institute of Mathematics\\[-0.8ex]
\small Henan Academy of Sciences\\[-0.8ex]
\small Zhengzhou 450046 China\\[-0.8ex]
\small \texttt{komatsu@zstu.edu.cn}\\
\small and\\
\small Department of Mathematics\\[-0.8ex]
\small Institute of Science Tokyo\\[-0.8ex]
\small 2-12-1 Ookayama, Meguro-ku\\[-0.8ex]
\small Tokyo 152-8551 Japan\\[-0.8ex]
\small \texttt{komatsu.t.al@m.titech.ac.jp}\\\\
Tengfei Shen\\
\small School of Mathematics\\[-0.8ex]
\small Northwest University\\[-0.8ex]
\small Research Center for Number Theory and Its Applications\\[-0.8ex]
\small Xi'an 710127 China\\[-0.8ex]
\small \texttt{stf13675206736@163.com}
}

\date{
}

\maketitle

\begin{abstract}
This paper considers the properties of Tribonacci numbers on identities, matrices, and determinants. In the first front part, we obtain several symmetric identities of Tribonacci numbers by a matrix-based approach and binomial inversion technique. In the core section of the latter half, we present a determinant representation of Tribonacci numbers in a slightly modified Toeplitz--Hessenberg form derived from Bell polynomials.

\noindent
{\bf Keywords:} Tribonacci numbers, identities, matrices, determinants, Bell polynomials

\noindent
{\bf MR Subject Classifications:} 11B39, 11B83, 11C20, 05A15, 05A19, 11B37.
\end{abstract}

\section{Introduction}

The sequence of Tribonacci numbers is believed to have first been investigated by M. Feinberg in 1963.
Although there are some variations in the initial values adopted by different authors, here we define the Tribonacci numbers $T_n$ as
\begin{equation}
T_n=T_{n-1}+T_{n-2}+T_{n-3}\quad(n\ge 3),\quad T_0=0,\,T_1=T_2=1\,.
\label{def:tribo}
\end{equation}
Since Tribonacci numbers are considered a type of extension of Fibonacci numbers, they share many properties similar to those of Fibonacci numbers.
For instance, just as Fibonacci numbers can be derived by using the rising diagonals of Pascal's triangle, Tribonacci numbers can be derived by adding up the elements on the rising diagonals of a similar triangular array.

Because of their more complex structural properties, combinatorial identities of Tribonacci numbers remain less explored compared to the Fibonacci case. Especially, identities involving double binomial coefficients and their symmetric forms via matrix inversion remain largely unaddressed.

In the first part of this paper, we address this gap by deriving a series of new combinatorial identities for Tribonacci numbers, using matrix methods and binomial inversion techniques. Our matrix-based approach provides a unified framework for deriving symmetric identities for linear recurrence sequences, which is applicable to Fibonacci, Tribonacci, and higher-order sequences.

While Fibonacci numbers have a domino model, Tribonacci numbers possess a similar domino model that incorporates triminoes.
While we can experience the beauty and power of the $Q$-matrix for extracting properties of Fibonacci numbers and polynomials, we can find a similar matrix for Tribonacci numbers and polynomials.  Consequently, for example, we can have
$$
\left|\begin{array}{ccc}
T_{n+2}&T_{n+1}&T_n\\
T_{n+1}&T_n&T_{n-1}\\
T_n&T_{n-1}&T_{n-2}
\end{array}\right|=-1\,.
$$
For the fundamental properties of Tribonacci numbers, see \cite[Ch. 49]{Koshy2} (see also \cite{Koshy1}).

The study of determinants holds fundamental importance; in particular, Fibonacci determinants serve as exemplary illustrations of their capacity to reveal elegant interconnections between linear algebra and number theory. A substantial body of literature addresses Fibonacci matrices or determinants, that is, matrices or determinants whose entries consist of Fibonacci numbers. Nevertheless, explicit determinant representations that directly express Fibonacci numbers remain relatively scarce. In response, several results have been established to express Fibonacci numbers and their companion sequences (see \cite{KKL22,Ko20b}) through the application of Cameron's operator \cite{Cameron}. More recently, these findings have been extended and generalized in \cite{Ko25a} to encompass Fibonacci and related polynomials. Additional related results are reported in \cite{KT10,KTH10}.

In a related development, \cite{GS20} presents multiple determinantal expressions for certain families of Toeplitz--Hessenberg matrices populated with Tribonacci numbers. Although determinantal expressions for Tribonacci numbers themselves have received limited attention, analogous determinant forms have been employed in \cite{KY26} to provide determinant representations of generalized Tribonacci numbers. In the core section of the latter half, we present a determinant representation of Tribonacci numbers in a slightly modified form related to Bell polynomials.

\section{Tribonacci identities}

While Fibonacci numbers possess many interesting identities, Tribonacci numbers are also known to have several identities, though not as numerous.
For example, while Fibonacci numbers satisfy
$$
F_{m+n}=F_{m+1}F_n+F_m F_{n-1}\quad({\rm \cite[(20.2)]{Koshy1}})\,,
$$
Tribonacci numbers satisfy
$$
T_{m+n}=T_{m+1}T_n+T_m T_{n-1}+T_m T_{n-2}+T_{m-1}T_{n-1}\quad({\rm \cite[Corollary 49.4 (1)]{Koshy2}})\,.
$$
While Fibonacci numbers are related to quadratic polynomials and binomial coefficients, Tribonacci numbers are related to cubic polynomials and trinomial coefficients (see, e.g., \cite{TongronK22}).  

More generally, for fixed real numbers with $(u,v,w)\ne(0,0,0)$ and $(a,b,c)\ne(0,0,0)$, let $\mathcal T_n^{(a,b,c)}$ be a generalized Tribonacci number, defined by
\begin{equation}
\mathcal T_n^{(a,b,c)}=u\mathcal T_{n-1}^{(a,b,c)}+v\mathcal T_{n-2}^{(a,b,c)}+w\mathcal T_{n-3}^{(a,b,c)}\quad(n\ge 3)
\label{def:ggtribo}
\end{equation}
with $\mathcal T_0=a$, $\mathcal T_1=b$, and $\mathcal T_2=c$.
When $(u,v,w)=(1,1,1)$ and $(a,b,c)=(0,1,1)$, $T_n=\mathcal T_n^{(0,1,1)}$ is the original Tribonacci number given in (\ref{def:tribo}). When $(u,v,w)=(1,0,1)$, $\mathcal T_n^{(0,1,1)}$ yields the Padovan sequence.

In this and next sections, assume that $(a,b,c)=(0,1,1)$.  For convenience, put $\mathsf T_{n}:=\mathsf T_n^{(0,1,1)}$.
In a similar manner to \cite{TongronK22},   
for integers $n_{1},n_{2},n_{3}$, setting  
\begin{align*}
\mathsf T(n_{1},n_{2},n_{3}):=T_{n_{1}+n_{2}+n_{3}}\,, 
\end{align*}
we shall consider the properties.   

For integers $n$ and $k$ with $n\geq{2k}$, we have
\begin{align*}
\mathsf T(n,0,k)&=u\mathsf T(n,0,k-1)+v\mathsf T(n-1,0,k-1)+w\mathsf T(n-1,-1,k-1)\\
&=\sum_{i=0}^{1}u^{1-i}\sum_{j=0}^{i}v^{i-j}w^{j}\mathsf T(n-i,-j,k-1)\,. 
\end{align*}
By extending this, we get
\begin{align*}
&\mathsf T(n,0,k)\\
&= \sum_{i_1=0}^{1}\cdots\sum_{i_k=0}^{1}u^{1-(i_{1}+\cdots+i_{k})}
  \sum_{j_{1}=0}^{i_1}\cdots\sum_{j_k=0}^{i_k}v^{i_{1}+\cdots+i_{k}-(j_{1}+\cdots+j_{k})}w^{j_{1}+\cdots+j_{k}}\\
  &\mathsf T\!\left(n-(i_{1}+\cdots+i_{k}),\ -(j_{1}+\cdots+j_{k}),\ 0\right)\nonumber\\
&=\sum_{I=0}^{k}\sum_{i_{1}+\cdot\cdot\cdot+i_{k}=I}u^I\sum_{J=0}^{I}\sum_{j_{1}+\cdot\cdot\cdot+j_{k}=J}v^{I-J}w^J\mathsf T(n-I,-J,0)\nonumber\\
&=\sum_{I=0}^{k}\sum_{J=0}^{I}\binom{k}{I}\binom{I}{J}u^Iv^{I-J}w^J\mathsf T(n-I,-J,0).\nonumber
\end{align*}
By applying $\mathsf T(n,0,k)=T_{n+k}$, for integers $n$ and $k$ with $n\geq{2k}$, replacing $I$ with $i$, $J$ with $i$ for the general form, we get
$$
T_{n+k}=\sum_{i=0}^{k}\sum_{j=0}^{i}\binom{k}{i}\binom{i}{j}u^Iv^{i-j}w^jT_{n-i-j},
$$

By setting $uv=\alpha,\ wv^{-1}=\beta$, we have the following

\begin{theorem}
\begin{align*}
T_{n+k}&=\sum_{i=0}^{k}\sum_{j=0}^{i}\binom{k}{i}\binom{i}{j}\alpha^{i}\beta^{j}T_{n-i-j}\\
&=\sum_{u=0}^{2k}T_{n-u}\sum_{i=\left\lfloor\frac{u}{2}\right\rfloor}^{k}\binom{k}{i}\binom{i}{u-i}\alpha^{i}\beta^{u-i}\,.
\end{align*}
\label{theom1}
\end{theorem}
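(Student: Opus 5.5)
The plan is to read the first line of Theorem~\ref{theom1} as an operator identity. The case $k=1$ fixes which recurrence it encodes: it reads $T_{n+1}=T_n+\alpha T_{n-1}+\alpha\beta T_{n-2}$. Comparing with (\ref{def:ggtribo}), this is the generalized recurrence with $u=1$, $\alpha=v$, $\alpha\beta=w$; in particular $\alpha=\beta=1$ gives the Tribonacci numbers (\ref{def:tribo}). I will therefore prove the identity for any sequence satisfying
$$
T_{m+1}=T_m+\alpha T_{m-1}+\alpha\beta T_{m-2}\qquad(m\ge 2),
$$
with the hypothesis $n\ge 2k$ ensuring that every index below stays where the recurrence is valid. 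This is the same mechanism as the iterated computation of $\mathsf T(n,0,k)$ preceding the theorem, but with the weights normalized consistently.

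Let $E$ be the shift $ET_m=T_{m+1}$ on sequences. The recurrence says that $E=1+\alpha E^{-1}(1+\beta E^{-1})$ when applied to $T_m$ for admissible $m$. All the operators involved are polynomials in $E^{-1}$ and so commute. Iterating $k$ times gives
$$
E^k=\bigl(1+\alpha E^{-1}(1+\beta E^{-1})\bigr)^k .
$$
Expand by the binomial theorem in $i$, and then expand $(1+\beta E^{-1})^i$ again in $j$. This yields the operator $\sum_{i=0}^{k}\sum_{j=0}^{i}\binom{k}{i}\binom{i}{j}\alpha^i\beta^jE^{-i-j}$. Applying it to $T_n$ gives the first equality.

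To make this rigorous without operators, I would instead induct on $k$. Write $T_{n+k+1}=T_{(n+1)+k}$ and apply the induction hypothesis at $n+1$. Then substitute the recurrence for each $T_{n+1-i-j}$; the smallest such index is $n+1-2k$, and the condition $n\ge 2k$ keeps it $\ge 1$. Finally collect coefficients using Pascal's rule in both binomial factors. This collecting step is the only real bookkeeping, and it amounts to multiplying the two-variable polynomial $(1+x+xy)^k$ by $(1+x+xy)$.

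For the second line, regroup by the total shift $u=i+j$ (here $u$ is a summation index, not the recurrence coefficient). Then $0\le u\le 2k$. The constraint $0\le j=u-i\le i$ becomes $\lceil u/2\rceil\le i\le u$, together with $i\le k$. The stated lower limit $\lfloor u/2\rfloor$ is harmless: when $u$ is odd, the extra term $i=\lfloor u/2\rfloor$ has $u-i>i$, so $\binom{i}{u-i}=0$. Likewise terms with $i>u$ vanish, so running $i$ up to $k$ is fine. Exchanging the finite sums then gives the second expression.

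I expect the main obstacle to be the notational inconsistency in the derivation before the statement (the stray $u^I$, and the substitution $uv=\alpha$, $wv^{-1}=\beta$). The proof has to commit to the normalization above, $u=1$, $v=\alpha$, $w=\alpha\beta$, under which the $k=1$ case is exactly the recurrence. Once that is fixed, the argument is just the double binomial expansion followed by reindexing.
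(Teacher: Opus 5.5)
Your proposal is correct and takes essentially the paper's route: the derivation preceding the theorem iterates the three-term recurrence $k$ times (its $\mathsf T(n_1,n_2,n_3)$ computation, which amounts to expanding $(1+x+xy)^k$ by two binomial theorems), and the proof proper regroups by $u=i+j$ exactly as you do. Your normalization $u=1$, $v=\alpha$, $w=\alpha\beta$ is the right repair of the paper's slip (its exponent $u^I$ should be $u^{k-I}$). One small fix in your induction step: proving the case $k+1$ assumes $n\ge 2k+2$, which makes the smallest index $n+1-2k\ge 3$, so the recurrence can be applied there; it is not merely $\ge 1$, as you wrote.
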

\begin{proof}
Since
\begin{align*}
&\sum_{i=0}^{k}\sum_{j=0}^{i}\binom{k}{i}\binom{i}{j}\alpha^{i}\beta^{j}\mathsf \mathsf T_{n-i-j}\\
&=\sum_{i+j=0}^{2k}\mathsf \mathsf T_{n-i-j}\binom{k}{i}\binom{i}{j}\alpha^{i}\beta^{j}\\
&=\sum_{u=0}^{2k}\mathsf \mathsf T_{n-u}\sum_{i=\left\lfloor\frac{u}{2}\right\rfloor}^{\max\{u,k\}}\binom{k}{i}\binom{i}{u-i}\alpha^{i}\beta^{u-i}\\
&=\sum_{u=0}^{2k}\mathsf \mathsf T_{n-u}\sum_{i=\left\lfloor\frac{u}{2}\right\rfloor}^{k}\binom{k}{i}\binom{i}{u-i}\alpha^{i}\beta^{u-i}\,,
\end{align*}
we have
\begin{align*}
T_{n+k}=\sum_{u=0}^{2k}\mathsf \mathsf T_{n-u}\sum_{i=\left\lfloor\frac{u}{2}\right\rfloor}^{k}\binom{k}{i}\binom{i}{u-i}\alpha^{i}\beta^{u-i}\,.
\end{align*}
\end{proof}

\section{Tribonacci matrices}

Consider the vector\ $\mathbf{T}:=( T_{n+k}, T_{n+k-1}, \cdots, T_{n})^T$.
By Theorem \ref{theom1}, we have
{\scriptsize
\begin{align}
&\begin{pmatrix} T_{n+k} \\ T_{n+k-1} \\ \vdots \\ T_{n+1} \\ T_n \end{pmatrix}
=
\begin{pmatrix}
\sum_{i=0}^{k} \binom{k}{i} \sum_{j=0}^{i} \binom{i}{j}\alpha^{i}\beta^{j} T_{n-(i+j)} \\[6pt]
\sum_{i=0}^{k-1} \binom{k-1}{i} \sum_{j=0}^{i} \binom{i}{j}\alpha^{i}\beta^{j} T_{n-(i+j)} \\[6pt]
\vdots \\[6pt]
\sum_{i=0}^{1} \binom{1}{i} \sum_{j=0}^{i} \binom{i}{j}\alpha^{i}\beta^{j} T_{n-(i+j)} \\[6pt]
\sum_{i=0}^{0} \binom{0}{i} \sum_{J=0}^{i} \binom{i}{j}\alpha^{i}\beta^{j} T_{n-(i+j)}
\end{pmatrix}\nonumber\\
&=
\begin{pmatrix}
\binom{k}{0}\alpha^{0} & \binom{k}{1}\alpha^{1} & \binom{k}{2}\alpha^{2} & \cdots & \binom{k}{k-1}\alpha^{k-1} & \binom{k}{k}\alpha^{k} \\
\binom{k-1}{0}\alpha^{0} & \binom{k-1}{1}\alpha^{1} & \binom{k-1}{2}\alpha^{2} & \cdots & \binom{k-1}{k-1}\alpha^{k-1} & 0 \\
\binom{k-2}{0}\alpha^{0} & \binom{k-2}{1}\alpha^{1} & \binom{k-2}{2}\alpha^{2} & \cdots & 0 & 0 \\
\vdots & \vdots & \vdots & \vdots & \vdots & \vdots \\
\binom{1}{0}\alpha^{0} & \binom{1}{1}\alpha^{1} & 0& \cdots & 0 & 0 \\
\binom{0}{0}\alpha^{0} & 0 & 0 & \cdots & 0 & 0
\end{pmatrix}
\begin{pmatrix}
\mathop\sum\limits_{j=0}^{0} \binom{0}{j}\beta^{j} T_{n-j} \\[6pt]
\mathop\sum\limits_{j=0}^{1} \binom{1}{j}\beta^{j} T_{n-(1+j)} \\[6pt]
\vdots \\[6pt]
\mathop\sum\limits_{j=0}^{k-1} \binom{k-1}{j}\beta^{j} T_{n-(k-1+j)} \\[6pt]
\mathop\sum\limits_{j=0}^{k} \binom{k}{j}\beta^{j} T_{n-(k+j)}
\end{pmatrix}\,.
\label{mat1}
\end{align}
}
Let $\widetilde{P}$ be the matrix on the left side of the right-hand side of (\ref{mat1}), which is the transform row-reversed matrix of lower Pascal matrix with parameter $\alpha$. Then we can derive that 
\begin{align*}
\widetilde{P}_{i,j}^{-1}=(-1)^{i+j-k}\binom{i}{k-j}\alpha^{-i},
\end{align*}
where $0\leq{i,j}\leq{k}$.
\medskip

Performing the transform of ${\widetilde{P}}^{-1}$\ on (\ref{mat1}), for $0\leq{i}\leq{k}$, we get
\begin{align}
\sum_{j=0}^{k}(-1)^{i+j-k}\binom{i}{k-j}\alpha^{-i}T_{n+k-j}=\sum_{t=0}^{i}\binom{i}{t}\beta^{t}T_{n-(i+t)}\,.
\end{align}
Since
\begin{align*}
\sum_{j=0}^{k}(-1)^{i+j-k}\binom{i}{k-j}\alpha^{-i}T_{n+k-j}&=\sum_{j=k-i}^k(-1)^{i+j-k}\binom{i}{k-j}\alpha^{-i}T_{n+k-j}\\
&=\sum_{t=0}^i(-1)^{i-t}\binom{i}{t}\alpha^{-i}T_{n+t}\,,
\end{align*}
we have the following

\begin{theorem}
\begin{align*}
\alpha^{-i}\sum_{t=0}^i(-1)^{i-t}\binom{i}{t}T_{n+i}=\sum_{t=0}^i\binom{i}{t}\beta^{t}T_{n-(i+t)}\,.
\end{align*}
\end{theorem}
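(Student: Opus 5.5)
The plan is to obtain the identity by binomial inversion of Theorem~\ref{theom1}; this is the content of the matrix computation with $\widetilde P^{-1}$ carried out just before the statement. I read the summand on the left as $T_{n+t}$ rather than $T_{n+i}$, since that is what the preceding computation produces. With $T_{n+i}$ the left side would be $\alpha^{-i}T_{n+i}\sum_t(-1)^{i-t}\binom it$, which vanishes for $i\ge1$, so that reading cannot be intended. The claim to prove is therefore
\begin{align*}
\sum_{t=0}^i(-1)^{i-t}\binom{i}{t}T_{n+t}=\alpha^{i}\sum_{t=0}^i\binom{i}{t}\beta^{t}T_{n-(i+t)}\,.
\end{align*}

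First, fix $n$ and $i$, and for $0\le m\le i$ put
\[
a_m:=T_{n+m},\qquad b_m:=\alpha^m\sum_{j=0}^m\binom{m}{j}\beta^jT_{n-m-j}.
\]
Theorem~\ref{theom1}, applied with $k=m$, gives $a_k=\sum_{m=0}^k\binom{k}{m}b_m$ for every $0\le k\le i$. Theorem~\ref{theom1} was derived under $n\ge 2k$, so I will assume $n\ge 2i$; this makes it valid for all $k\le i$ simultaneously. In matrix language, the vector $(a_0,\dots,a_i)^T$ equals the lower Pascal matrix times $(b_0,\dots,b_i)^T$. This is exactly the system (\ref{mat1}) with its rows reversed.

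Second, invert the system. The inverse of the Pascal matrix $\bigl(\binom{k}{m}\bigr)$ is $\bigl((-1)^{k-m}\binom{k}{m}\bigr)$. This follows from the orthogonality relation
\[
\sum_{m}(-1)^{m-l}\binom{k}{m}\binom{m}{l}=\binom{k}{l}\sum_{s}(-1)^s\binom{k-l}{s}=\delta_{kl}.
\]
It is equivalent to the stated formula $\widetilde P^{-1}_{i,j}=(-1)^{i+j-k}\binom{i}{k-j}\alpha^{-i}$ once the row reversal and the scaling by powers of $\alpha$ are undone. Applying the inverse gives $b_i=\sum_{t=0}^i(-1)^{i-t}\binom{i}{t}a_t$. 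Substituting the definitions of $a_t$ and $b_i$ and dividing by $\alpha^i$ yields the claimed identity.

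The main point needing care, rather than difficulty, is bookkeeping. One must check that Theorem~\ref{theom1} holds for all $k\le i$ with the same fixed $n$, which the hypothesis $n\ge 2i$ ensures. One must also check the reindexing $t=k-j$ in the inverse, as done in the display preceding the statement. If one prefers to avoid the range restriction, Theorem~\ref{theom1} can instead be viewed as a polynomial identity in the shift operator $E$ acting on $(T_m)$: it says $E^k=(1+\alpha E^{-1}+\alpha\beta E^{-2})^k$ on the sequence, given the relation induced by the recurrence. The inversion is then the identity $(X-1)^i=\alpha^iE^{-i}(1+\beta E^{-1})^i$ with $X=E$, which can be checked directly.
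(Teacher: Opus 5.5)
Your route is the paper's route: applying $\widetilde P^{-1}$ to (\ref{mat1}) and reindexing by $t=k-j$ is your inversion of the lower Pascal matrix, once the row reversal and the $\alpha$-scaling are undone. Your repairs ($T_{n+t}$ for $T_{n+i}$, and $n\ge 2i$) are also the right ones. The real problem is in the input that both you and the paper take for granted. Theorem~\ref{theom1} with $\alpha=uv$, $\beta=w/v$ is false unless $u=1$, and so is the identity you are proving. Already for $i=1$ it says $T_{n+1}=T_n+uv\,T_{n-1}+uw\,T_{n-2}$, which disagrees with the recurrence $T_{n+1}=uT_n+vT_{n-1}+wT_{n-2}$ unless $u=1$. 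Concretely, take $u=2$, $v=w=1$, so that $T_0,\dots,T_5=0,1,1,3,8,20$. At $n=4$, $i=1$ the left side is $(T_5-T_4)/2=6$, while the right side is $T_3+T_2=4$.

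The slip is in the derivation of Theorem~\ref{theom1}. Expanding $T_{n+k}=(u+vE^{-1}+wE^{-2})^kT_n$ gives the weight $\binom{k}{i}\binom{i}{j}u^{k-i}v^{i-j}w^j$. The paper instead writes $u^{i}v^{i-j}w^j$, which is what gets absorbed into $(uv)^i(w/v)^j=\alpha^i\beta^j$. Your shift-operator remark is where you could have caught this. The relation $E=1+\alpha E^{-1}+\alpha\beta E^{-2}$ coincides with the recurrence $E=u+vE^{-1}+wE^{-2}$ only when $u=1$, so the ``direct check'' you mention fails otherwise. The correct statement follows from $(E-u)^i=(vE^{-1}+wE^{-2})^i$ on $(T_m)$, valid for $n\ge 2i$:
\[
\sum_{t=0}^i(-1)^{i-t}\binom{i}{t}u^{i-t}T_{n+t}=\sum_{t=0}^i\binom{i}{t}v^{i-t}w^{t}T_{n-i-t}.
\]
When $u=1$ (so $\alpha=v$, $\beta=w/v$) this is exactly the stated identity. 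Your matrix argument proves this corrected version almost verbatim, with two changes. First, replace $\bigl(\binom{k}{m}\bigr)$ by $\bigl(\binom{k}{m}u^{k-m}\bigr)$, whose inverse is $\bigl(\binom{k}{m}(-u)^{k-m}\bigr)$. Second, set $b_m=\sum_j\binom{m}{j}v^{m-j}w^jT_{n-m-j}$.
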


Putting $n=2k$ and $i=k$, we get
$$
\alpha^{-i}\sum_{t=0}^k(-1)^{k-t}\binom{k}{t}T_{2 k+t}=\sum_{j=0}^k\binom{k}{j}T_{k-j}=\sum_{j=0}^k\binom{k}{j}\beta^{j}T_j\,.
$$
By using the binomial inversion, we have the following

\begin{Cor}
\begin{align}
\beta^{j}T_j&=\alpha^{-i}\sum_{u=0}^j(-1)^{j-u}\binom{j}{u}\sum_{t=0}^u(-1)^{u-t}\binom{u}{t}T_{2 u+t}\nonumber\\
&=\alpha^{-i}\sum_{t=0}^j(-1)^{j-t}\binom{j}{t}\sum_{u=t}^j\binom{j-t}{u-t}T_{2 u+t}\,.
\end{align}
\end{Cor}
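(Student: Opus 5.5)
We regard the specialization of the preceding Theorem at $n=2k$, $i=k$ as an identity of the form
$$
a_k=\sum_{j=0}^k\binom{k}{j}b_j\qquad(k\ge 0),
$$
where $b_j:=\beta^{j}T_j$ and $a_k:=\alpha^{-k}\sum_{t=0}^k(-1)^{k-t}\binom{k}{t}T_{2k+t}$. The factor $\alpha^{-i}$ displayed in the corollary is the one inherited from the specialization $i=k$; for the inversion to be consistent it must travel with the inner index, i.e.\ $\alpha^{-u}$ inside the sum. I would state this reading explicitly (or simply set $\alpha=1$, as for the classical Tribonacci numbers, where $u=v=w=1$ forces $\alpha=\beta=1$). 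To obtain the second line of the displayed specialization, I would reindex $\sum_{j}\binom{k}{j}\beta^jT_{k-j}$ by $j\mapsto k-j$ so the weight sits on $T_j$; when $\beta=1$ this is immediate, and in general I would track $\beta$ the same way as $\alpha$.

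\textbf{Step 1 (binomial inversion).} With the pair $(a_k,b_k)$ in hand, I would apply the standard inversion: $a_k=\sum_j\binom{k}{j}b_j$ for all $k$ is equivalent to $b_j=\sum_{u=0}^j(-1)^{j-u}\binom{j}{u}a_u$. For completeness, this follows from the orthogonality relation
$$
\sum_{u=m}^{j}(-1)^{j-u}\binom{j}{u}\binom{u}{m}=\delta_{jm},
$$
which itself comes from $\binom{j}{u}\binom{u}{m}=\binom{j}{m}\binom{j-m}{u-m}$ together with $(1-1)^{j-m}$. Substituting the definition of $a_u$ then gives the first displayed line of the corollary.

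\textbf{Step 2 (rewriting the double sum).} I would interchange the order of summation over $0\le t\le u\le j$, and use $\binom{j}{u}\binom{u}{t}=\binom{j}{t}\binom{j-t}{u-t}$ and $(-1)^{j-u}(-1)^{u-t}=(-1)^{j-t}$. This yields
$$
\sum_{t=0}^j(-1)^{j-t}\binom{j}{t}\sum_{u=t}^j\binom{j-t}{u-t}T_{2u+t},
$$
which is the second line.

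The main obstacle is not the inversion itself but the bookkeeping of the parameters. The inner sum in the definition of $a_k$ uses $T_{2k+t}$, so the hypothesis $n=2k$ is built into the sequence $a_k$, and the identity $a_k=\sum_j\binom{k}{j}b_j$ must hold for every $k\ge0$ with the same $b_j$. That in turn requires $b_j$ to carry no $k$-dependence, so the $\beta$-weight must be placed on $T_j$ rather than on $T_{k-j}$. I would check this consistency first, at small $k$ with $\alpha=\beta=1$: for $k=1$, $T_3-T_2=1=T_0+T_1$, and for $k=2$, $T_4-2T_5+T_6=4-14+24=14$ while $T_0+2T_1+T_2=3$. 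This check shows that the preceding Theorem's specialization needs the index correction (its left side should read $T_{n+t}$ with shifts), so I would first re-derive the correct $a_k$ from the Theorem with $\sum_t(-1)^{i-t}\binom{i}{t}T_{n+t}$. Only then would I apply Steps 1–2, which are purely formal and hold for any sequences $a,b$.
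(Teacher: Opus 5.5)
Your route is the paper's: specialize the preceding Theorem at $n=2k$, $i=k$, apply binomial inversion, and swap the double sum via $\binom{j}{u}\binom{u}{t}=\binom{j}{t}\binom{j-t}{u-t}$. Your Steps 1 and 2 are correct. Reading $\alpha^{-i}$ as $\alpha^{-u}$ attached to the $u$-summand is the right repair; in the second line this factor then sits inside the inner sum, next to $T_{2u+t}$. Your consistency check, however, is miscomputed. You used $T_6=24$, but $T_6=13$ (the value $24$ is $T_7$), so $T_4-2T_5+T_6=4-14+13=3=T_0+2T_1+T_2$. The paper's specialization with $T_{2k+t}$ is therefore correct as printed; the Theorem's $T_{n+i}$ is only a typo for $T_{n+t}$. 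Your $a_k$ was right from the start, and there is no index correction to make.

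The real gap is the $\beta$-bookkeeping. Reindexing gives $\sum_{t}\binom{k}{t}\beta^{t}T_{k-t}=\sum_{j}\binom{k}{j}\beta^{k-j}T_j$. So the weight on $T_j$ is $\beta^{k-j}$, which depends on $k$ --- exactly what you said must not happen --- and $b_j=\beta^jT_j$ does not satisfy $a_k=\sum_j\binom{k}{j}b_j$. Carrying $\beta$ along like $\alpha$ yields $\beta^{-k}a_k=\sum_j\binom{k}{j}\beta^{-j}T_j$. Inverting that gives $\beta^{-j}T_j=\sum_u(-1)^{j-u}\binom{j}{u}\beta^{-u}a_u$, not $\beta^jT_j$ on the left. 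Inverting $a_k$ as it stands gives instead
\[
\sum_{u=0}^{j}(-1)^{j-u}\binom{j}{u}a_u=\sum_{m=0}^{j}\binom{j}{m}(\beta-1)^{j-m}T_m .
\]
At $j=1$ the right-hand side is $T_1=1$, while the corollary asserts $\beta T_1=\beta$. So the identity with $\beta^jT_j$ on the left is false unless $\beta=1$, and no bookkeeping can derive it. The paper makes the same slip when it writes $\sum_j\binom{k}{j}T_{k-j}=\sum_j\binom{k}{j}\beta^{j}T_j$. What your argument actually proves is the case $\beta=1$, with $\alpha^{-u}$ inside the sums; this includes the classical case $\alpha=\beta=1$. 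You should state that restriction explicitly rather than assert that $\beta$ can be tracked through.
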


In particular, when $u=v=w=1$, which means $\alpha=\beta=1$, for $0\leq{i}\leq{k}$, putting $x_i:=\sum_{j=0}^{n-i}\binom{i}{j}T_{n-(i+j)}$, (\ref{mat1}) becomes a nonhomogeneous linear system of equations with ($k+1$) variables:
$$
\left\{
\begin{array}{l}\displaystyle
\binom{k}{0}x_0+\binom{k}{1}x_1+\cdots+\binom{k}{k-1}x_{k-1}+\binom{k}{k}x_k=T_{n+k}\,, \\\displaystyle
\binom{k-1}{0}x_0+\binom{k-1}{1}x_1+\cdots+\binom{k-1}{k-1}x_{k-1}+0\cdot x_k=T_{n+k-1}\,, \\\displaystyle
\dots\\\displaystyle
\binom{1}{0}x_0+\binom{1}{1}x_1+0\cdot x_2+\cdots++0\cdot x_k=T_{n+1}\,, \\\displaystyle
\binom{0}{0}x_0+0\cdot x_1+0\cdot x_2+\cdots+0\cdot x_k=T_{n}\,
\end{array}
\right.
$$
Let $B_{i}$ be the matrix obtained from $\widetilde{P}$ by replacing its $i$-th column with the vector $\mathbf{T}$. 
Since $\widetilde{P}$ is the $(k+1)\times(k+1)$ lower-triangular Pascal matrix whose lower right half elements are all zero and anti-diagonal entries are all $1$, we get $|\widetilde{P}|=(-1)^{\binom{k+1}{2}}$.  
By Cramer's rule, we obtain
\begin{align*}
&\sum_{j=0}^{n-i}\binom{i}{j}T_{n-(i+j)}\\
&=\frac{|B_i|}{|\widetilde{P}|}
=(-1)^{\frac{k(k+1)}{2}}
\left|
\begin{array}{ccccccc}
\binom{k}{0} & \cdots & \binom{k}{i-1} & T_{n+k} & \binom{k}{i+1} & \cdots & \binom{k}{k} \\
\binom{k-1}{0} & \cdots & \binom{k-1}{i-1} & T_{n+k-1} & \binom{k-1}{i+1} & \cdots & 0 \\
\vdots & & \vdots & \vdots & \vdots & & \vdots \\
\binom{1}{0} & \cdots & 0 & T_{n+1} & 0 & \cdots & 0 \\
\binom{0}{0} & \cdots & 0 & T_{n} & 0 & \cdots & 0
\end{array}
\right|\,.
\end{align*}
Note that the left-hand side does not depend on $k$. 

Since 
\begin{equation}
|B_i|
=(-1)^{i}
\left|
\begin{array}{ccccccc}
T_{n+k} & \binom{k}{0} & \cdots &\binom{k}{i-1} & \binom{k}{i+1} & \cdots & \binom{k}{k}\\
T_{n+k-1} & \binom{k-1}{0} & \cdots & \binom{k-1}{i-1} & \binom{k-1}{i+1} & \cdots & 0 \\
\vdots & \vdots & & \vdots & \vdots & & \vdots \\
T_{n+i+1} & \binom{i+1}{0} & \cdots & \binom{i+1}{i-1} & \binom{i+1}{i+1} & \cdots & 0 \\
T_{n+i} & \binom{i}{0} & \cdots & \binom{i}{i-1} & 0 & \cdots & 0 \\
T_{n+i-1} & \binom{i-1}{0} & \cdots & \binom{i-1}{i-1} & 0 & \cdots & 0 \\
\vdots & \vdots & & \vdots & \vdots & & \vdots \\
T_{n} & \binom{0}{0} & \cdots & 0 & 0 & \cdots &0
\end{array}
\right|\,, 
\label{det-rex1}
\end{equation}  
we have the following expression.    

\begin{Cor}
\begin{align*}
&\sum_{j=0}^{n-i}\binom{i}{j}T_{n-(i+j)}\\
&=(-1)^{\frac{k(k+1)}{2}+i}
\left|
\begin{array}{ccccccc}
T_{n+k} & \binom{k}{0} & \cdots &\binom{k}{i-1} & \binom{k}{i+1} & \cdots & \binom{k}{k}\\
T_{n+k-1} & \binom{k-1}{0} & \cdots & \binom{k-1}{i-1} & \binom{k-1}{i+1} & \cdots & 0 \\
\vdots & \vdots & & \vdots & \vdots & & \vdots \\
T_{n+i+1} & \binom{i+1}{0} & \cdots & \binom{i+1}{i-1} & \binom{i+1}{i+1} & \cdots & 0 \\
T_{n+i} & \binom{i}{0} & \cdots & \binom{i}{i-1} & 0 & \cdots & 0 \\
T_{n+i-1} & \binom{i-1}{0} & \cdots & \binom{i-1}{i-1} & 0 & \cdots & 0 \\
\vdots & \vdots & & \vdots & \vdots & & \vdots \\
T_{n} & \binom{0}{0} & \cdots & 0 & 0 & \cdots &0
\end{array}
\right|\,.
\end{align*}
\end{Cor}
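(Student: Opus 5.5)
The plan is to combine Cramer's rule, already applied to the system (\ref{mat1}) with $\alpha=\beta=1$, with the column manipulation recorded in (\ref{det-rex1}). First I would verify that $\widetilde P$ is invertible and compute its determinant. Its entries are $\widetilde P_{r,c}=\binom{k-r}{c}$ for $0\le r,c\le k$. These vanish when $r+c>k$, and the anti-diagonal entries $r+c=k$ all equal $\binom{c}{c}=1$. Expanding, or reversing the order of the rows, gives a triangular matrix with unit diagonal. Reversing $k+1$ rows costs $\lfloor (k+1)/2\rfloor$ transpositions, and this has the same parity as $\binom{k+1}{2}=k(k+1)/2$. So $|\widetilde P|=(-1)^{k(k+1)/2}$, and since $1/|\widetilde P|=|\widetilde P|$, Cramer's rule gives
\[
x_i=(-1)^{k(k+1)/2}|B_i|.
\]
Here $B_i$ is $\widetilde P$ with column $i$ replaced by $\mathbf T$.

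Second, I would justify (\ref{det-rex1}). Move the column $\mathbf T$ from position $i$ (columns indexed $0,\dots,k$) to the front by $i$ adjacent column swaps, which contributes $(-1)^i$. The remaining columns keep their order $\binom{\cdot}{0},\dots,\binom{\cdot}{i-1},\binom{\cdot}{i+1},\dots,\binom{\cdot}{k}$. The zero pattern displayed in (\ref{det-rex1}) is just the vanishing $\binom{m}{c}=0$ for $c>m$. Combining the two signs gives the overall sign $(-1)^{k(k+1)/2+i}$ claimed in the corollary.

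Third, I would check that the left side really is $x_i$. From (\ref{mat1}) with $\alpha=\beta=1$, the $i$-th entry of the right vector is
\[
\sum_{j=0}^{i}\binom{i}{j}T_{n-(i+j)}.
\]
Since $\binom{i}{j}=0$ for $j>i$, the upper limit can be written as $n-i$ whenever $n-i\ge i$. This holds under the standing assumption $n\ge 2k\ge 2i$, which is also needed for the indices $T_{n-(i+j)}$ to be nonnegative when the expansion of Theorem \ref{theom1} is used. Finally, the system (\ref{mat1}) itself must hold row by row: row $r$ is Theorem \ref{theom1} applied with $k$ replaced by $k-r$, which is legitimate since $n\ge 2(k-r)$.

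The main obstacle is sign bookkeeping rather than any real difficulty. Specifically, I must confirm that the anti-diagonal-reversal sign of $|\widetilde P|$ matches $(-1)^{\binom{k+1}{2}}$ for every residue of $k$ mod $4$. I would do this by checking the parity identity $\lfloor (k+1)/2\rfloor\equiv \binom{k+1}{2}\pmod 2$ directly on the four cases.
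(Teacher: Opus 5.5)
Your proposal is correct and matches the paper's argument: Cramer's rule on the system (\ref{mat1}) with $\alpha=\beta=1$, $|\widetilde P|=(-1)^{\binom{k+1}{2}}$, and moving the $\mathbf T$ column to the front to pick up $(-1)^i$ as in (\ref{det-rex1}). You also supply details the paper leaves implicit. These are the row-reversal parity check for $|\widetilde P|$, the justification that the upper limit $n-i$ agrees with $i$ when $n\ge 2k\ge 2i$, and the fact that each row of (\ref{mat1}) is Theorem \ref{theom1} with $k$ replaced by $k-r$.
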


The usage of matrix approaches and binomial inversion gives a series of identities about Tribonacci numbers, systematically giving a mutual expression between later terms and previous terms. It may be suitable for deriving symmetric identities for linear recurrence sequences.

\section{Tribonacci determinants}

The generating function of generalized Tribonacci number $\mathcal T_n^{(a,b,c)}$ is given by
\begin{equation}
\mathcal G(t):=\sum_{n=0}^\infty\mathcal T_n^{(a,b,c)}t^n=\frac{a+(b-u a)t+(c-u b-v a)t^2}{1-u t-v t^2-w t^3}\,.
\label{gf:ggtribo}
\end{equation}
When $a=b=c=1$, it is reduced to that of the classical Tribonacci numbers, given by
$$
\sum_{n=0}^\infty T_n t^n=\frac{t}{1-t-t^2-t^3}\,.
$$

In \cite{KY26}, the determinant expressions of the generalized Tribonacci numbers $\mathcal T_{n+1}^{(0,1,1)}$ and $\mathcal T_{2 n+2}^{(0,1,1)}$ were given, though that of $\mathcal T_{2 n+1}^{(0,1,1)}$ was unknown.

\subsection{A determinant expression of $\mathcal T_{2 n+1}^{(0,1,1)}$}

In this subsection, we shall show a determinant expression of $\mathcal T_{2 n+1}^{(0,1,1)}$, which could not be obtained in \cite{KY26}.  For simplicity, put $\mathcal T_{2 n+1}:=\mathcal T_{2 n+1}^{(0,1,1)}$ in this subsection.

Put
$$
\gamma:=\frac{D_1+\sqrt{D_1^2+4 D_2}}{2}\quad\hbox{and}\quad\delta:=\frac{D_1-\sqrt{D_1^2+4 D_2}}{2}\,,
$$
where $D_1=u^2-u+v$ and $D_2=w(u-1)$.  For $n\ge 2$, let
\begin{equation}
r_n=\frac{r_3-r_2\delta}{\gamma-\delta}\gamma^{n-2}+\frac{r_2\gamma-r_3}{\gamma-\delta}\delta^{n-2}
\label{r-binet}
\end{equation}
with $r_1=-(u+v)$, $r_2=u^2-u^3-u^2 v-u w-w$ and $r_3=D_1 r_2+D_2 r_1-w^2$.

\begin{theorem}
For $n\ge 1$,
$$
\mathcal T_{2 n+1}=\left|\begin{array}{ccccc}
-r_1&1&0&\cdots&0\\
r_2&-r_1&\ddots&&\vdots\\
-r_3&r_2&&\ddots&0\\
\vdots&&&-r_1&1\\
(-1)^n r_n&\cdots&-r_3&r_2&-r_1
\end{array}\right|\,.
$$
\label{th:det-t2n1}
\end{theorem}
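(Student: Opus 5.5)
The plan is to turn the Toeplitz--Hessenberg determinant into a generating-function identity, and then compare it with the bisection of the generating function (\ref{gf:ggtribo}) taken at $(a,b,c)=(0,1,1)$.

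\textbf{Step 1: from the determinant to a recurrence.} Let $D_n$ be the $n\times n$ determinant in Theorem~\ref{th:det-t2n1}, with $D_0:=1$. Its $(i,j)$ entry for $i\ge j$ is $a_{i-j+1}$, where $a_k=(-1)^k r_k$; there are $1$'s on the superdiagonal and zeros above it. Expanding along the last row (the standard Trudi-type expansion for Hessenberg matrices) gives
$$
D_n=\sum_{k=1}^n(-1)^{k-1}a_kD_{n-k}=-\sum_{k=1}^n r_kD_{n-k}\,.
$$
Equivalently,
$$
\sum_{n\ge0}D_nx^n=\frac{1}{1+R(x)},\qquad R(x):=\sum_{k\ge1}r_kx^k .
$$
So it suffices to prove
$$
\sum_{n\ge0}\mathcal T_{2n+1}x^n=\frac1{1+R(x)},
$$
which also covers the case $n=0$, since $\mathcal T_1=1$.

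\textbf{Step 2: bisect the generating function.} For $(a,b,c)=(0,1,1)$, (\ref{gf:ggtribo}) reads $\bigl(t+(1-u)t^2\bigr)/(1-ut-vt^2-wt^3)$. I will multiply numerator and denominator by $1+ut-vt^2+wt^3$. The denominator becomes
$$
(1-vt^2)^2-(ut+wt^3)^2=Q(t^2),\qquad Q(x)=1-(u^2+2v)x+(v^2-2uw)x^2-w^2x^3 .
$$
Collecting the odd part of the new numerator gives $t\bigl(1+(u-u^2-v)t^2+(1-u)wt^4\bigr)=t\,N(t^2)$ with
$$
N(x)=1-D_1x-D_2x^2 .
$$
Hence
$$
\sum_{n\ge0}\mathcal T_{2n+1}x^n=\frac{N(x)}{Q(x)}.
$$
This is where $D_1$ and $D_2$ come from.

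\textbf{Step 3: identify $R$.} We now need $R(x)=Q(x)/N(x)-1=\bigl(Q(x)-N(x)\bigr)/N(x)$. The numerator $Q-N$ has degree $3$ and no constant term. Therefore the coefficients of $R$ satisfy
$$
r_n=D_1r_{n-1}+D_2r_{n-2}\qquad(n\ge4).
$$
Since $\gamma$ and $\delta$ are the roots of $z^2-D_1z-D_2$, this recurrence is exactly the one solved by (\ref{r-binet}). Moreover, (\ref{r-binet}) reproduces $r_2$ at $n=2$ and $r_3$ at $n=3$, so it holds for all $n\ge2$.

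It then remains to check the three initial coefficients against the values defined in the paper. From $R(x)\,N(x)=Q(x)-N(x)$:
\begin{itemize}
\item Coefficient of $x$: $r_1=-(u^2+2v)+D_1=-(u+v)$.
\item Coefficient of $x^2$: $r_2-D_1r_1=v^2-2uw+D_2$. This should give $r_2=u^2-u^3-u^2v-uw-w$.
\item Coefficient of $x^3$: $r_3-D_1r_2-D_2r_1=-w^2$. This is exactly the stated definition of $r_3$.
\end{itemize}

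\textbf{Main obstacle.} The only delicate point is the algebra in the $x^2$ coefficient, namely verifying $(-u-v)(u^2-u+v)+v^2-uw-w=u^2-u^3-u^2v-uw-w$. I expect this to expand directly, since the $uv$ and $v^2$ terms cancel. A second point needs care if $D_1^2+4D_2=0$: then $\gamma=\delta$ and (\ref{r-binet}) degenerates. In that case I would state the theorem through the recurrence for $r_n$ and read (\ref{r-binet}) as its limit.
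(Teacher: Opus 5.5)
Your proof is correct and follows the paper's own route: the bisected generating function $N(x)/Q(x)$ of Lemma~\ref{lem:gf-ggtribo}, its reciprocal $1+R(x)$ as in Lemma~\ref{lem:gf-inv-ggtribo}, and the Hessenberg expansion behind Cameron's operator (Lemma~\ref{lem:cameron}). The differences are in execution: multiplying by $1+ut-vt^2+wt^3$ replaces the two-step recurrence of Lemma~\ref{lem:rel-2step}, and reading $r_n=D_1r_{n-1}+D_2r_{n-2}$ ($n\ge4$) directly off $R=(Q-N)/N$ replaces the paper's inductive convolution argument, which makes that step cleaner.
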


By using the inversion relation in \cite[Theorem 1, Lemma 4]{KR18}:
$$
\alpha_n=\begin{vmatrix} R(1) & 1 & & 0\\
R(2) & \ddots &  \ddots & \\
\vdots & \ddots &  \ddots & 1\\
R(n) & \cdots &  R(2) & R(1) \\
 \end{vmatrix}\quad
\Longleftrightarrow\quad
R(n)=\begin{vmatrix} \alpha_1 & 1 & &0 \\
\alpha_2 & \ddots &  \ddots & \\
\vdots & \ddots &  \ddots & 1\\
\alpha_n & \cdots &  \alpha_2 & \alpha_1 \\
 \end{vmatrix}\,,
$$
we obtain the following result.

\begin{Cor}
For $n\ge 1$,
$$
\left|\begin{array}{ccccc}
\mathcal T_{3}&1&0&\cdots&0\\
\mathcal T_{5}&\mathcal T_{3}&\ddots&&\vdots\\
\mathcal T_{7}&\mathcal T_{5}&&\ddots&0\\
\vdots&&&\mathcal T_{3}&1\\
\mathcal T_{2 n+1}&\cdots&\mathcal T_{7}&\mathcal T_{5}&\mathcal T_{3}
\end{array}\right|=
(-1)^n r_n\,.
$$
\label{cor:det-t2n1}
\end{Cor}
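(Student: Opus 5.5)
We derive the corollary directly from Theorem \ref{th:det-t2n1} by the inversion relation of \cite[Theorem 1, Lemma 4]{KR18} quoted above. Set
$$
R(1)=-r_1,\qquad R(j)=(-1)^{j} r_j\quad (j\ge 2),
$$
so that $R(1)=-r_1$, $R(2)=r_2$, $R(3)=-r_3$, and in general $R(j)=(-1)^j r_j$ for every $j\ge 1$. The matrix in Theorem \ref{th:det-t2n1} is then exactly the Toeplitz--Hessenberg matrix with first column $R(1),\dots,R(n)$, ones on the superdiagonal and $R(1)$ on the diagonal. Hence the theorem reads $\alpha_n=\mathcal T_{2n+1}$ for all $n\ge1$, where $\alpha_n$ is the left-hand determinant of the inversion relation.

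The inversion relation is an equivalence between two sequences valid for all $n$ simultaneously. Both directions rest on the fact that $\alpha_n$ and $R(n)$ satisfy the same convolution recurrence,
$$
\sum_{j=0}^{n}(-1)^{j}\alpha_{n-j}R(j)=0\qquad(\alpha_0=R(0)=1).
$$
This says that the generating functions satisfy $\bigl(1+\sum_{n\ge1}(-1)^n\alpha_n t^n\bigr)\bigl(1+\sum_{n\ge1}(-1)^n R(n)t^n\bigr)=1$, a relation symmetric in $\alpha$ and $R$. So the hypothesis $\alpha_m=\mathcal T_{2m+1}$ for all $m\le n$ yields
$$
R(n)=\left|\begin{array}{cccc}\alpha_1&1&&0\\ \alpha_2&\ddots&\ddots&\\ \vdots&\ddots&\ddots&1\\ \alpha_n&\cdots&\alpha_2&\alpha_1\end{array}\right|.
$$
We then substitute $\alpha_1=\mathcal T_3$, $\alpha_2=\mathcal T_5,\dots,\alpha_n=\mathcal T_{2n+1}$ into this determinant. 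Since $R(n)=(-1)^n r_n$, this gives exactly the claimed identity.

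The only point needing care is the index bookkeeping. We must check that the sign pattern $-r_1,r_2,-r_3,\dots,(-1)^n r_n$ in Theorem \ref{th:det-t2n1} really is the single sequence $R(j)=(-1)^j r_j$, including $j=1$. We must also check that the determinant in Theorem \ref{th:det-t2n1} is of the Toeplitz form with the same $R(1)$ on every diagonal entry and the superdiagonal equal to $1$, as the inversion lemma requires. Once this matching is confirmed, no further computation is needed.
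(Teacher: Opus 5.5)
Your proof is correct and matches the paper's argument: you read the matrix of Theorem~\ref{th:det-t2n1} as the Toeplitz--Hessenberg determinant with $R(j)=(-1)^j r_j$ and $\alpha_n=\mathcal T_{2n+1}$, and then apply the inversion relation of \cite[Theorem 1, Lemma 4]{KR18}. One small slip in your side justification: the recurrence $\sum_{j=0}^{n}(-1)^j\alpha_{n-j}R(j)=0$ ($n\ge1$) corresponds to $\bigl(\sum_{n\ge0}\alpha_n t^n\bigr)\bigl(\sum_{n\ge0}(-1)^nR(n)t^n\bigr)=1$, with the sign on only one factor (putting it on both factors gives $\sum_j\alpha_{n-j}R(j)=0$ instead), although the symmetry in $\alpha$ and $R$ that you need still holds via $t\mapsto -t$.
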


\noindent
{\it Remark.}
When $u=v=w=1$, by $r_1=-2$, $r_2=-3$, $r_n=-4$ ($n\ge 3$), the determinant in Theorem \ref{th:det-t2n1} is reduced to
$$
T_{2 n+1}=\left|\begin{array}{ccccc}
2&1&0&\cdots&0\\
-3&2&\ddots&&\vdots\\
4&-3&&\ddots&0\\
\vdots&&&2&1\\
4(-1)^{n-1}&\cdots&4&-3&2
\end{array}\right|\,.
$$
(see, \cite[p.1032]{KY26}).
When $u=v=w=1$, the determinant in Corollary \ref{cor:det-t2n1} is reduced to
$$
\left|\begin{array}{ccccc}
T_{3}&1&0&\cdots&0\\
T_{5}&T_{3}&\ddots&&\vdots\\
T_{7}&T_{5}&&\ddots&0\\
\vdots&&&T_{3}&1\\
T_{2 n+1}&\cdots&T_{7}&T_{5}&T_{3}
\end{array}\right|=\begin{cases}
2&\text{if $n=1$};\\
-3&\text{if $n=2$};\\
4(-1)^{n-1}&\text{if $n\ge 3$}
\end{cases}
$$
(\cite[Theorem 3.2]{GS20}).
\bigskip

In order to prove Theorem \ref{th:det-t2n1}, we need several lemmas.

\begin{Lem}
For $n\ge 6$
$$
\mathcal T_n=(u^2+2 v)\mathcal T_{n-2}-(v^2-2 u w)\mathcal T_{n-4}+w^2 T_{n-6}\,.
$$
\label{lem:rel-2step}
\end{Lem}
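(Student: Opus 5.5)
The plan is to show that the even-step recurrence comes from the characteristic polynomial $p(x)=x^3-ux^2-vx-w$ of (\ref{def:ggtribo}), by passing from the roots of $p$ to their squares. If $\lambda_1,\lambda_2,\lambda_3$ are the roots of $p$, then the sequence $\mathcal T_n$ satisfies any linear recurrence whose characteristic polynomial vanishes at every $\lambda_i$. In particular, the recurrence in terms of $\mathcal T_{n-2},\mathcal T_{n-4},\mathcal T_{n-6}$ should be the one whose characteristic polynomial is $q(x^2)$, where
$$
q(y)=(y-\lambda_1^2)(y-\lambda_2^2)(y-\lambda_3^2).
$$
To avoid assuming distinct roots, I will argue purely algebraically via polynomial divisibility.

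\textbf{Step 1: find $q$.} Consider $p(x)p(-x)$. From $p(x)=x^3-vx-(ux^2+w)$ and $p(-x)=-x^3+vx-(ux^2+w)$ we get
$$
-p(x)p(-x)=(x^3-vx)^2-(ux^2+w)^2=x^6-(u^2+2v)x^4+(v^2-2uw)x^2-w^2 .
$$
So this polynomial is $Q(x^2)$, where $Q(y)=y^3-(u^2+2v)y^2+(v^2-2uw)y-w^2$. Its coefficients match exactly those claimed in Lemma \ref{lem:rel-2step}.

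\textbf{Step 2: transfer divisibility to the recurrence.} Since $p(x)$ divides $Q(x^2)$, write $Q(x^2)=p(x)\,s(x)$ with $s(x)=-p(-x)=x^3+ux^2-vx+w$. Let $E$ be the shift operator, $E\mathcal T_m=\mathcal T_{m+1}$. The recurrence (\ref{def:ggtribo}) says $p(E)\mathcal T_m=0$ for all $m\ge 0$. Hence $Q(E^2)\mathcal T_m=s(E)p(E)\mathcal T_m=0$ for all $m\ge 0$, because $s(E)$ only applies further shifts to a sequence that is already identically zero. Written out, $Q(E^2)\mathcal T_{n-6}=0$ is exactly the stated identity
$$
\mathcal T_n=(u^2+2v)\mathcal T_{n-2}-(v^2-2uw)\mathcal T_{n-4}+w^2\mathcal T_{n-6}.
$$
The condition $n\ge 6$ ensures that every index used is nonnegative and that (\ref{def:ggtribo}) applies wherever it is invoked. (The $T_{n-6}$ in the statement is presumably a typo for $\mathcal T_{n-6}$.)

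As an alternative without operators, one can expand $\mathcal T_n$ via (\ref{def:ggtribo}) twice and eliminate the odd-offset terms. This can be done by combining the relations $p(E)\mathcal T_{n-3}$, $p(E)\mathcal T_{n-4},\dots,p(E)\mathcal T_{n-6}$ with coefficients $1,u,-v,w$, which are the coefficients of $s$. This is the same computation done by hand.

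\textbf{Main obstacle.} The only real work is the sign and coefficient bookkeeping in Step 1, in particular confirming the middle coefficient $v^2-2uw$ and the constant term $-w^2$. I would double-check these numerically with $u=v=w=1$. There the identity reads $T_n=3T_{n-2}+T_{n-4}+T_{n-6}$, and with $T_0,\dots,T_6=0,1,1,2,4,7,13$ we get $3\cdot4+1+0=13=T_6$, as required.
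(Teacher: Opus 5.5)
Your proof is correct. The identity $-p(x)p(-x)=(x^3-vx)^2-(ux^2+w)^2=Q(x^2)$ checks out coefficient by coefficient. The operator step is sound for one-sided sequences, since every instance $p(E)\mathcal T_j$ you use has $j\ge n-6\ge 0$. You are also right that $T_{n-6}$ should read $\mathcal T_{n-6}$.

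The paper argues differently on the surface and never introduces $p$, $Q$ or $E$. It multiplies the recurrence at $n-3$ by $w$ and the recurrence at $n-2$ by $v$, then subtracts to cancel $vw\mathcal T_{n-5}$; this is its equation (\ref{eq:gt34}). It then expands $\mathcal T_n=u\mathcal T_{n-1}+v\mathcal T_{n-2}+w\mathcal T_{n-3}$, substitutes the recurrence for $\mathcal T_{n-1}$, and inserts (\ref{eq:gt34}). That is exactly the combination $p(E)\mathcal T_{n-3}+u\,p(E)\mathcal T_{n-4}-v\,p(E)\mathcal T_{n-5}+w\,p(E)\mathcal T_{n-6}$, whose coefficients are those of your $s(x)=-p(-x)$. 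So your \emph{alternative without operators} is literally the paper's proof.

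What your route adds:
\begin{itemize}
\item It explains why this elimination works and where the coefficients come from: they are the elementary symmetric functions of the squared roots, i.e.\ one Graeffe root-squaring step. It needs no assumption that the roots are distinct.
\item It extends verbatim to higher-order recurrences and to other step sizes via $\prod_{\zeta^k=1}p(\zeta x)$.
\item In reciprocal form, $(1-ut-vt^2-wt^3)(1+ut-vt^2+wt^3)=1-(u^2+2v)t^2+(v^2-2uw)t^4-w^2t^6$, it produces directly the denominator used in Lemma~\ref{lem:gf-ggtribo}.
\end{itemize}
The paper's version is a short, self-contained computation, but it gives no indication of how the multipliers $v$ and $w$ were chosen.
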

\begin{proof}
By the definition in (\ref{def:ggtribo}), from
$$
w\mathcal T_{n-3}=u w\mathcal T_{n-4}+v w\mathcal T_{n-5}+w^2\mathcal T_{n-6}
$$
and
$$
v\mathcal T_{n-2}=u v\mathcal T_{n-3}+v^2\mathcal T_{n-4}+v w\mathcal T_{n-5}\,,
$$
we get
\begin{equation}
w\mathcal T_{n-3}=u w\mathcal T_{n-4}+v\mathcal T_{n-2}-u v\mathcal T_{n-3}-v^2\mathcal T_{n-4}+w^2\mathcal T_{n-6}\,.
\label{eq:gt34}
\end{equation}
By the definition in (\ref{def:ggtribo}) again and (\ref{eq:gt34}), we have
\begin{align*}
\mathcal T_n&=u\mathcal T_{n-1}+v\mathcal T_{n-2}+w\mathcal T_{n-3}\\
&=u(u\mathcal T_{n-2}+v\mathcal T_{n-3}+w\mathcal T_{n-4})+v\mathcal T_{n-2}\\
&\qquad +u w\mathcal T_{n-4}+v\mathcal T_{n-2}-u v\mathcal T_{n-3}-v^2\mathcal T_{n-4}+w^2\mathcal T_{n-6}\\
&=(u^2+2 v)\mathcal T_{n-2}-(v^2-2 u w)\mathcal T_{n-4}+w^2 T_{n-6}\,.
\end{align*}
\end{proof}

The generating function of $\mathcal T_{2 n+1}$ is given as follows.

\begin{Lem}
$$
\sum_{n=0}^\infty\mathcal T_{2 n+1}t^n=\frac{1-(u^2-u+v)t-w(u-1)t^2}{1-(u^2+2 v)t+(v^2-2 u w)t^2-w^2 t^3}\,.
$$
\label{lem:gf-ggtribo}
\end{Lem}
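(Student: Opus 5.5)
Write $S(t):=\sum_{n\ge0}\mathcal T_{2n+1}t^n$. The plan is to use Lemma \ref{lem:rel-2step}: for $n\ge 3$ (so that $2n+1\ge 6$, i.e.\ index $\ge 7$) it gives
$$
\mathcal T_{2n+1}=(u^2+2v)\mathcal T_{2n-1}-(v^2-2uw)\mathcal T_{2n-3}+w^2\mathcal T_{2n-5}.
$$
This says the odd-indexed subsequence satisfies a third-order linear recurrence with characteristic polynomial $1-(u^2+2v)t+(v^2-2uw)t^2-w^2t^3$ (in the reversed form). Multiplying $S(t)$ by this denominator therefore kills every coefficient of $t^n$ with $n\ge3$. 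So
$$
\bigl(1-(u^2+2v)t+(v^2-2uw)t^2-w^2t^3\bigr)S(t)=c_0+c_1t+c_2t^2,
$$
with
\begin{align*}
c_0&=\mathcal T_1,\\
c_1&=\mathcal T_3-(u^2+2v)\mathcal T_1,\\
c_2&=\mathcal T_5-(u^2+2v)\mathcal T_3+(v^2-2uw)\mathcal T_1.
\end{align*}

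Next I compute the initial values from \eqref{def:ggtribo} with $(a,b,c)=(0,1,1)$:
\begin{align*}
\mathcal T_1&=1,\\
\mathcal T_2&=1,\\
\mathcal T_3&=u+v,\\
\mathcal T_4&=u(u+v)+v+w,\\
\mathcal T_5&=u\mathcal T_4+v\mathcal T_3+w\mathcal T_2=u^3+2u^2v+uv+uw+v^2+w.
\end{align*}
Substituting these gives the numerator coefficients:
\begin{align*}
c_0&=1,\\
c_1&=u+v-u^2-2v=-(u^2-u+v),
\end{align*}
and
$$
c_2=u^3+2u^2v+uv+uw+v^2+w-(u^2+2v)(u+v)+v^2-2uw.
$$
Expanding $(u^2+2v)(u+v)=u^3+u^2v+2uv+2v^2$ simplifies this to
$$
c_2=u^2v-uv-uw+w.
$$

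This is the step I expect to be the main obstacle, because the claimed numerator coefficient is $-w(u-1)=w-uw$, while my computation leaves an extra term $u^2v-uv$. I would first recheck $\mathcal T_3$ and $\mathcal T_5$ carefully against the stated convention, in particular whether $\mathcal T_2=c=1$ is meant. I would also check the special case $u=v=w=1$, where the stated formula gives numerator $1-t$ and denominator $1-3t+t^2-t^3$. The series $(1-t)/(1-3t+t^2-t^3)$ begins $1,2,5,\dots$. However, $T_5=7$ (from $T_3=2$, $T_4=4$), and my $c_2$ formula at $u=v=w=1$ also gives $0$, so the numerator is again $1-t$. So the two forms agree when $u=v=w=1$, and the check in the denominator should be redone.

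In any case, the proof structure is the one above: use the recurrence of Lemma \ref{lem:rel-2step} restricted to odd indices, multiply $S(t)$ by the denominator, and read off the numerator from $\mathcal T_1,\mathcal T_3,\mathcal T_5$. The only real work is the polynomial algebra for $c_2$, and if the stated numerator does not match it, I would reconcile the convention for $\mathcal T_2$ before proceeding.
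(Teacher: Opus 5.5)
Your strategy is the same as the paper's proof. You multiply $S(t)$ by $1-(u^2+2v)t+(v^2-2uw)t^2-w^2t^3$ and use Lemma~\ref{lem:rel-2step}, which holds for index $\ge 6$ and hence for $2n+1$ with $n\ge 3$, to kill the coefficients of $t^n$ for $n\ge 3$. You then read off the numerator from $\mathcal T_1,\mathcal T_3,\mathcal T_5$. Your $c_0$ and $c_1$ are right. The gap is that the proposal never proves the statement. It ends with $c_2=u^2v-uv-uw+w$, which contradicts the claim, and suggests changing the convention for $\mathcal T_2$. That suggestion is a red herring: the discrepancy is an arithmetic slip in $\mathcal T_5$. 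Correctly,
\[
\mathcal T_5=u\mathcal T_4+v\mathcal T_3+w\mathcal T_2=(u^3+u^2v+uv+uw)+(uv+v^2)+w=u^3+u^2v+2uv+uw+v^2+w ,
\]
not $u^3+2u^2v+uv+uw+v^2+w$. This is also the value the paper uses when computing $r_2$. With it, subtracting $(u^2+2v)(u+v)=u^3+u^2v+2uv+2v^2$ and adding $v^2-2uw$ gives $c_2=w-uw=-w(u-1)$, exactly as stated. For a numerical check, $u=2$, $v=1$, $w=0$ gives $\mathcal T_5=17$, while your formula gives $19$.

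Your special-case check was also miscomputed, and it could not have caught the error anyway. At $u=v=w=1$ you have $v^2-2uw=-1$, so the denominator is $1-3t-t^2-t^3$, not $1-3t+t^2-t^3$. Then $(1-t)/(1-3t-t^2-t^3)=1+2t+7t^2+24t^3+\cdots$, which matches $T_1,T_3,T_5,T_7$. Your spurious extra term $u^2v-uv=uv(u-1)$ vanishes at $u=1$, so any test with $u=1$ is blind to it. Once $\mathcal T_5$ is corrected, your argument is complete and matches the paper's.
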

\begin{proof}
Put $f(t):=\sum_{n=0}^\infty\mathcal T_{2 n+1}t^n$. By combining this with $t f(t)$, $t^2 f(t)$ and $t^3 f(t)$, and using Lemma \ref{lem:rel-2step} to eliminate higher-order infinite terms, we obtain
\begin{align*}
&\bigl(1-(u^2+2 v)t+(v^2-2 u w)t^2-w^2 t^3\bigr)f(t)\\
&=\mathcal T_1+\bigl(\mathcal T_3-(u^2+2 v)\mathcal T_1\bigr)t+\bigl(\mathcal T_5-(u^2+2 v)\mathcal T_3+(v^2-2 u w)T_1\bigr)t^2\\
&=1-(u^2-u+v)t-w(u-1)t^2\,.
\end{align*}
\end{proof}

The sequence $r_n$ arises from the reciprocal of the series in Lemma \ref{lem:gf-ggtribo}.

\begin{Lem}
$$
\left(\sum_{n=0}^\infty\mathcal T_{2 n+1}t^n\right)^{-1}=1+\sum_{n=1}^\infty r_n t^n\,.
$$
\label{lem:gf-inv-ggtribo}
\end{Lem}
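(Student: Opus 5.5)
The plan is to compute the reciprocal directly from Lemma \ref{lem:gf-ggtribo} and compare coefficients. By Lemma \ref{lem:gf-ggtribo},
$$
\left(\sum_{n=0}^\infty\mathcal T_{2n+1}t^n\right)^{-1}=\frac{1-(u^2+2v)t+(v^2-2uw)t^2-w^2t^3}{1-D_1t-D_2t^2}\,,
$$
with $D_1=u^2-u+v$ and $D_2=w(u-1)$ as before. Since the denominator has constant term $1$, this is a formal power series with constant term $1$. Write it as $1+\sum_{n\ge1}\rho_nt^n$. It then suffices to show $\rho_n=r_n$ for all $n\ge1$, where $r_1$ is the given value and $r_n$ ($n\ge2$) is given by (\ref{r-binet}).

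First, multiply out to get
$$
(1-D_1t-D_2t^2)\Bigl(1+\sum_{n\ge1}\rho_nt^n\Bigr)=1-(u^2+2v)t+(v^2-2uw)t^2-w^2t^3\,,
$$
and compare the coefficients of $t^1,t^2,t^3$ and $t^n$ ($n\ge4$). This gives:
\begin{itemize}
\item from $t^1$: $\rho_1=D_1-u^2-2v=-(u+v)=r_1$;
\item from $t^2$: $\rho_2=v^2-2uw+D_1\rho_1+D_2$, and expanding $D_1\rho_1=-(u^2-u+v)(u+v)=-u^3-u^2v+u^2-v^2$ yields $\rho_2=u^2-u^3-u^2v-uw-w=r_2$;
\item from $t^3$: $\rho_3=D_1\rho_2+D_2\rho_1-w^2$, which is exactly the definition of $r_3$;
\item from $t^n$ with $n\ge4$: $\rho_n=D_1\rho_{n-1}+D_2\rho_{n-2}$.
\end{itemize}

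Second, I check that the sequence defined by (\ref{r-binet}) satisfies the same data for $n\ge2$. The numbers $\gamma,\delta$ are the roots of $x^2-D_1x-D_2=0$. Hence any combination $A\gamma^{n-2}+B\delta^{n-2}$ satisfies $x_n=D_1x_{n-1}+D_2x_{n-2}$. Evaluating (\ref{r-binet}) at $n=2$ gives $\bigl((r_3-r_2\delta)+(r_2\gamma-r_3)\bigr)/(\gamma-\delta)=r_2$. At $n=3$ it gives $\bigl((r_3-r_2\delta)\gamma+(r_2\gamma-r_3)\delta\bigr)/(\gamma-\delta)=r_3$. So (\ref{r-binet}) reproduces $r_2,r_3$ and the recurrence, and induction gives $\rho_n=r_n$ for all $n\ge2$.

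The only real obstacle is the polynomial bookkeeping in the $t^2$ coefficient, which I have checked above, together with a side issue: the Binet form requires $\gamma\ne\delta$, i.e. $D_1^2+4D_2\ne0$. In the degenerate case I would remark that $r_n$ should be read as the solution of the recurrence $r_n=D_1r_{n-1}+D_2r_{n-2}$ with the given $r_2,r_3$, equivalently as the limit of (\ref{r-binet}). The coefficient comparison above proves the lemma in that form regardless of whether the roots coincide.
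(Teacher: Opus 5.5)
Your proof is correct, but it takes a different route from the paper's. The paper does not invert the closed form of Lemma~\ref{lem:gf-ggtribo}. Instead it:
\begin{enumerate}
\item compares coefficients in $\bigl(\sum_{n}\mathcal T_{2n+1}t^n\bigr)\bigl(\sum_{n}r_nt^n\bigr)=1$ to get the convolution recursion $r_n=-\sum_{k=0}^{n-1}\mathcal T_{2(n-k)+1}r_k$ (with $r_0=1$);
\item computes $r_1,r_2,r_3$ by hand from $\mathcal T_3,\mathcal T_5,\mathcal T_7$;
\item proves $r_n=D_1r_{n-1}+D_2r_{n-2}$ for $n\ge4$ by strong induction on that convolution, using Lemma~\ref{lem:rel-2step} to collapse the leading terms, with the base cases $n=4,5$ left as ``easily checked'';
\item solves the recurrence.
\end{enumerate}

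You instead invert the rational function. The denominator $1-D_1t-D_2t^2$ of the reciprocal gives you the two-term recurrence at once. The fact that the new numerator has degree $3$ explains why the recurrence only starts at $n=4$ and where the extra $-w^2$ in $r_3$ comes from.

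Your version is shorter and more transparent. It avoids the unchecked base cases and the loosely written inductive bookkeeping. You also verify explicitly that (\ref{r-binet}) reproduces $r_2,r_3$, and you flag the degenerate case $D_1^2+4D_2=0$, where the Binet form is undefined; the paper passes over this.

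In exchange, your argument inherits its correctness from the numerator $1-D_1t-D_2t^2$ in Lemma~\ref{lem:gf-ggtribo}. That numerator does check out: $\mathcal T_3-(u^2+2v)\mathcal T_1=-D_1$ and $\mathcal T_5-(u^2+2v)\mathcal T_3+(v^2-2uw)\mathcal T_1=-D_2$. The paper's route works directly from the recurrence of Lemma~\ref{lem:rel-2step} and the first few terms of the sequence.
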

\begin{proof}
Since
\begin{align*}
1&=\left(\sum_{n=0}^\infty\mathcal T_{2 n+1}t^n\right)\left(\sum_{n=0}^\infty r_n t^n\right)\\
&=\sum_{n=0}^\infty\left(\sum_{k=0}^n\mathcal T_{2(n-k)+1}r_k\right)t^n
\end{align*}
($r_0=1$ for convenience),  we have for $n\ge 1$
$$
\sum_{k=0}^n\mathcal T_{2(n-k)+1}r_k=0
$$
or
\begin{equation}
r_n=-\sum_{k=0}^{n-1}\mathcal T_{2(n-k)+1}r_k\quad(n\ge 1)\,.
\label{eq:rel89}
\end{equation}
When $n=1$ in (\ref{eq:rel89}), $r_1=-\mathcal T_3=-(u+v)$.
When $n=2$,
\begin{align*}
r_2&=-(\mathcal T_5+\mathcal T_3 r_1)\\
&=-(u^3+2 u v+u^2 v+v^2+w+u w)+(u+v)^2\\
&=u^2-u^3-u^2 v-u w-w\,.
\end{align*}
When $n=3$,
\begin{align*}
r_3&=-(\mathcal T_7+\mathcal T_5 r_1+\mathcal T_3 r_2)\\
&=D_1 r_2+D_2 r_1-w^2\,,
\end{align*}
where $D_1=u^2-u+v$ and $D_2=w(u-1)$.
We shall prove that for $n\ge 4$
\begin{equation}
r_n=D_1 r_{n-1}+D_2 r_{n-2}\,.
\label{eq:rn4}
\end{equation}
It is easily checked that the cases for $n=4,5$ are valid.
Suppose that the relation (\ref{eq:rn4}) is valid for $n=m-1$ and $n=m-2$.
Then, by using the assumption and Lemma \ref{lem:rel-2step},
\begin{align*}
r_n&=-D_1(\mathcal T_{2 n-1}+r_1\mathcal T_{2 n-3}+r_2\mathcal T_{2 n-5}+r_3\mathcal T_{2 n-7}+\cdots+r_{n-2}\mathcal T_3)\\
&\qquad -D_2(\mathcal T_{2 n-3}+r_1\mathcal T_{2 n-5}+r_2\mathcal T_{2 n-7}+\cdots+r_{n-2}\mathcal T_3)\\
&=-\bigl(D_1 T_{2 n-1}+(D_1 r_1+D_2)T_{2 n-3}+(w^2+r_3)T_{2 n-5}\bigr)\\
&\qquad -(r_4 T_{2 n-7}+\cdots+r_{n-1}T_3)\\
&=-(T_{2 n+1}+r_1 T_{2 n-1}+r_2 T_{2 n-3}+r_3 T_{2 n-5}+r_4 T_{2 n-7}+\cdots+r_{n-1}T_3)\,.
\end{align*}
Hence, the relation (\ref{eq:rn4}) is also valid for $n=m$.
Solving (\ref{eq:rn4}), we get the expression of $r_n$ in (\ref{r-binet}).
\end{proof}

Finally, the result in Theorem \ref{th:det-t2n1} is due to Cameron's operator $\mathcal A$, which is defined on the set of sequences of non-negative integers as follows: for $x=\{x_n\}_{n\ge 1}$ and $z=\{z_n\}_{n\ge 1}$, set $\mathcal A x=z$, where
\begin{equation}
1+\sum_{n=1}^\infty z_n t^n=\left(1-\sum_{n=1}^\infty x_n t^n\right)^{-1}
\label{cameron}
\end{equation}
(\cite{Cameron}). It is shown in \cite{KKL22,Ko20b} that the Cameron's operator corresponds to the determinant expression for its explanation and applications.

\begin{Lem}
For an integer $n\ge 1$,
$$
z_n=\left|
\begin{array}{ccccc}
x_1&1&0&&\\
-x_2&x_1&&&\\
\vdots&\vdots&\ddots&1&0\\
(-1)^n x_{n-1}&(-1)^{n-1}x_{n-2}&\cdots&x_1&1\\
(-1)^{n-1}x_n&(-1)^{n}x_{n-1}&\cdots&-x_2&x_1
\end{array}
\right|\,.
$$
\label{lem:cameron}
\end{Lem}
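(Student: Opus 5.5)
We prove Lemma~\ref{lem:cameron} by turning the defining relation (\ref{cameron}) into a linear recurrence for the $z_n$ and showing that the Hessenberg determinant obeys the same recurrence with the same initial value. First, multiply (\ref{cameron}) through by $1-\sum_{n\ge1}x_nt^n$ and compare coefficients of $t^n$ for $n\ge1$. Writing $z_0=1$, this gives
$$
z_n=\sum_{k=1}^{n}x_k z_{n-k}\qquad(n\ge1).
$$
In particular $z_1=x_1$, and the $z_n$ are uniquely determined by this recurrence.

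Let $D_n$ denote the $n\times n$ determinant in the statement and put $D_0=1$. Its $(i,j)$ entry is $(-1)^{i-j}x_{i-j+1}$ for $i\ge j$, it is $1$ on the superdiagonal, and it is $0$ above the superdiagonal. Since $D_1=x_1=z_1$, it suffices to prove that $D_n$ satisfies the same recurrence.

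For this we expand $D_n$ along its last row, which is the standard expansion for lower Hessenberg matrices. Take the entry in column $j$ of the last row. Deleting row $n$ and column $j$ leaves a block lower-triangular matrix. Its upper-left block is the $(j-1)\times(j-1)$ leading principal submatrix, which is exactly $D_{j-1}$ because the matrix is Toeplitz. Its lower-right block is upper triangular with the superdiagonal $1$'s on its diagonal, so that block contributes $1$. This gives
$$
D_n=\sum_{j=1}^{n}(-1)^{n+j}\,(-1)^{n-j}x_{n-j+1}\,D_{j-1}=\sum_{k=1}^{n}x_kD_{n-k},
$$
after substituting $k=n-j+1$ and using $(-1)^{n+j}(-1)^{n-j}=1$. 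So $D_n$ satisfies the same recurrence as $z_n$, and $D_n=z_n$ by induction on $n$.

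The only delicate step is the sign bookkeeping in this expansion. The cofactor sign $(-1)^{n+j}$ must be combined correctly with the alternating signs $(-1)^{n-j}$ built into the entries. One must also confirm that the triangular block of $1$'s contributes $+1$ and no extra sign. This follows from the block-triangular structure, since that block's own diagonal consists of the superdiagonal $1$'s.

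As a cross-check we would compute $D_2=x_1^2+x_2$, which matches $z_2=x_1z_1+x_2$ from the recurrence. The alternative approach is Cramer's rule applied to the lower-triangular Toeplitz system, with matrix having $1$ on the diagonal and $-x_{i-j}$ below it, whose solution is $(z_1,\dots,z_n)$. We would keep the cofactor-expansion argument above because it is shorter.
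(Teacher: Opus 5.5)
Your proof is correct. The paper itself gives no argument for this lemma. It only remarks that the link between Cameron's operator and these determinants is shown in \cite{KKL22,Ko20b}, so your proposal supplies a self-contained proof where the paper relies on a citation.

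Your route is the standard mechanism for identities of this kind: read off $z_n=\sum_{k=1}^n x_k z_{n-k}$ from (\ref{cameron}), then expand the Toeplitz--Hessenberg determinant along its last row. In fact your expansion shows something slightly more general. A determinant of this shape with unit superdiagonal and $(i,j)$ entry $R(i-j+1)$ satisfies $D_n=\sum_{k=1}^n(-1)^{k-1}R(k)D_{n-k}$. This is the same determinant--recurrence correspondence behind the inversion relation from \cite{KR18} that the paper quotes before Corollary~\ref{cor:det-t2n1}. The alternating signs $R(k)=(-1)^{k-1}x_k$ in the statement are exactly what cancel the cofactor signs. The citation keeps the paper short; your version makes explicit why the entries must carry those signs.

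One small slip. After deleting row $n$ and column $j$, the lower-right $(n-j)\times(n-j)$ block is lower triangular, not upper triangular. Its entry in position $(r,s)$, with $0\le r,s\le n-j-1$, is the $(j+r,\,j+1+s)$ entry of the original matrix, and this vanishes when $s>r$. Its diagonal is still the superdiagonal $1$'s, so its determinant is $1$ and your conclusion is unaffected.
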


Theorem \ref{th:det-t2n1} is the direct consequence of Lemma \ref{lem:cameron}.

\subsection{More on Tribonacci determinants}

In this subsection, we aim not merely to present determinant representations of Tribonacci numbers, but to demonstrate that Tribonacci numbers with specific initial values correspond to those with different initial values in terms of determinants, and, furthermore, that such correspondence is highly restricted.

In the definition (\ref{def:bell-p}) of the complete Bell polynomials, we choose $x_m=(m-1)!\mathcal T_m^{(a,b,c)}$.
Let
$$
\mathcal F(t):=\sum_{m=1}^\infty (m-1)!\mathcal T_m^{(a,b,c)}\frac{t^m}{m!}
=\sum_{m=1}^\infty\frac{\mathcal T_m^{(a,b,c)}}{m}t^m\,.
$$
Then,
\begin{align*}
\mathcal F'(t)&=\frac{1}{t}\sum_{m=1}^\infty\mathcal T_m^{(a,b,c)}t^m=\frac{\mathcal G(t)-a}{t}\\
&=\frac{1}{t}\left(\frac{a+(b-u a)t+(c-u b-v a)t^2}{1-u t-v t^2-w t^3}-a\right)\\
&=\frac{b+(c-u b)t+w a t^2}{1-u t-v t^2-w t^3}\,.
\end{align*}
Since
$$
\frac{d}{d t}\log(1-u t-v t^2-w t^3)=-\frac{u+2 v t+3 w t^2}{1-u t-v t^2-w t^3}\,,
$$
when $(a,b,c)=(3,u,u^2+2 v)$, we have
\begin{align*}
\exp\left(\mathcal F(t)\right)&=\exp\left(-\log(1-u t-v t^2-w t^3)\right)\\
&=\frac{1}{1-u t-v t^2-w t^3}
=\sum_{n=0}^\infty\mathcal T_{n+1}^{(0,1,u)}t^n\,.
\end{align*}
Notice that if $(a,b,c)\ne(3,u,u^2+2 v)$, then the form of $\mathcal F(t)$ is not simple, nor is $\exp\left(\mathcal F(t)\right)$.

Therefore, by Lemma \ref{lem:gtrudi} (1) and (2), we obtain

\begin{theorem}
\begin{align*}
\mathcal T_{n+1}^{(0,1,u)}&=\frac{1}{n!}\mathbf Y_n(\mathcal T_1^{(3,u,u^2+2 v)},1!\mathcal T_2^{(3,u,u^2+2 v)},2!\mathcal T_3^{(3,u,u^2+2 v)},3!\mathcal T_4^{(3,u,u^2+2 v)},\dots)\\
&=\frac{1}{n!}\left|\begin{array}{ccccc}
\mathcal T_1^{(3,u,u^2+2 v)}&-1&0&\cdots&0\\
\mathcal T_2^{(3,u,u^2+2 v)}&\mathcal T_1^{(3,u,u^2+2 v)}&-2&&\vdots\\
\vdots&&\ddots&&0\\
\mathcal T_{n-1}^{(3,u,u^2+2 v)}&\mathcal T_{n-2}^{(3,u,u^2+2 v)}&\cdots&\mathcal T_1^{(3,u,u^2+2 v)}&-n+1\\
\mathcal T_{n}^{(3,u,u^2+2 v)}&\mathcal T_{n-1}^{(3,u,u^2+2 v)}&\cdots&\mathcal T_2^{(3,u,u^2+2 v)}&\mathcal T_1^{(3,u,u^2+2 v)}
\end{array}
\right|\,.
\end{align*}
\end{theorem}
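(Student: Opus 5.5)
The plan is to read the theorem off the exponential generating function of the complete Bell polynomials, using the computation of $\exp(\mathcal F(t))$ carried out just before the statement. Recall the defining relation (\ref{def:bell-p}):
$$
\exp\Bigl(\sum_{m\ge1}x_m\frac{t^m}{m!}\Bigr)=\sum_{n\ge0}\mathbf Y_n(x_1,x_2,\dots)\frac{t^n}{n!}.
$$
Substitute $x_m=(m-1)!\,\mathcal T_m^{(3,u,u^2+2v)}$. The inner sum then becomes $\mathcal F(t)=\sum_{m\ge1}\mathcal T_m^{(3,u,u^2+2v)}t^m/m$.

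The first step is to make the identity $\mathcal F(t)=-\log(1-ut-vt^2-wt^3)$ fully rigorous. Differentiate: by the generating function (\ref{gf:ggtribo}) with $(a,b,c)=(3,u,u^2+2v)$, the numerator of $\mathcal F'(t)$ is
$$
b+(c-ub)t+wat^2=u+2vt+3wt^2.
$$
So $\mathcal F'(t)=\frac{d}{dt}\bigl(-\log(1-ut-vt^2-wt^3)\bigr)$. Both sides vanish at $t=0$, so they agree as formal power series. Exponentiating gives $(1-ut-vt^2-wt^3)^{-1}$.

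The second step identifies this series with $\sum_{n\ge0}\mathcal T_{n+1}^{(0,1,u)}t^n$. From (\ref{gf:ggtribo}) with $(a,b,c)=(0,1,u)$, the numerator is $t$, so $\sum_n\mathcal T_n^{(0,1,u)}t^n=t/(1-ut-vt^2-wt^3)$. Shifting the index by one gives the claim. Comparing coefficients of $t^n$ then yields $\mathcal T_{n+1}^{(0,1,u)}=\mathbf Y_n(x_1,x_2,\dots)/n!$, which is the first equality of the theorem.

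The third step gives the determinant form. I would invoke Lemma \ref{lem:gtrudi}(2), the standard determinant representation of $\mathbf Y_n$. That lemma gives an $n\times n$ Hessenberg determinant with entries $\binom{i-1}{j-1}x_{i-j+1}$ on and below the diagonal and $-1$ on the superdiagonal. For $x_m=(m-1)!\,\mathcal T_m$ these entries are not yet in the form shown in the theorem. Scale row $i$ by $1/(i-1)!$ and column $j$ by $(j-1)!$; the total factor is $\prod(j-1)!/\prod(i-1)!=1$, so the determinant is unchanged. The $(i,j)$ entry with $i\ge j$ becomes
$$
\frac{(i-1)!}{(j-1)!\,(i-j)!}\,(i-j)!\,\mathcal T_{i-j+1}\cdot\frac{(j-1)!}{(i-1)!}=\mathcal T_{i-j+1}.
$$
The superdiagonal entry $(i,i+1)$ becomes $-1\cdot i!/(i-1)!=-i$. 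This reproduces the displayed matrix with $\mathcal T_1$ on the diagonal, $\mathcal T_n$ in the corner, and superdiagonal $-1,-2,\dots,-(n-1)$.

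The main obstacle is matching the exact normalization and sign conventions of Lemma \ref{lem:gtrudi}, which is stated later in the paper. If that lemma instead gives the form with the superdiagonal $-1,-2,\dots$ and entries $x_m/(m-1)!$ directly, then this step is immediate. If its form differs, I would rederive the determinant from the Bell recurrence
$$
\mathbf Y_{n}=\sum_{k=0}^{n-1}\binom{n-1}{k}x_{n-k}\mathbf Y_{k}.
$$
With $x_m=(m-1)!\,\mathcal T_m$ and $y_n=\mathbf Y_n/n!$, this becomes $n\,y_n=\sum_{k=0}^{n-1}\mathcal T_{n-k}\,y_k$. I would then apply Cramer's rule to this lower-triangular system, with diagonal coefficients $k$ for $y_k$; this produces exactly the displayed Hessenberg determinant divided by $n!$.
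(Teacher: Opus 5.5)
Your proposal is correct and follows the paper's route: the paper likewise computes $\mathcal F'(t)$ from (\ref{gf:ggtribo}), observes that for $(a,b,c)=(3,u,u^2+2v)$ it equals $-\frac{d}{dt}\log(1-ut-vt^2-wt^3)$, identifies $\exp(\mathcal F(t))$ with $\sum_{n\ge0}\mathcal T_{n+1}^{(0,1,u)}t^n$, and reads off both equalities from Lemma \ref{lem:gtrudi}, (1)$\Leftrightarrow$(2). Lemma \ref{lem:gtrudi}(2) is already stated with entries $a_{i-j+1}$ and superdiagonal $-1,-2,\dots,-(m-1)$, so your third step is immediate there; your row/column rescaling, or the Cramer's-rule argument from $n\,y_n=\sum_{k<n}\mathcal T_{n-k}y_k$, in effect proves (1)$\Leftrightarrow$(2) of that lemma.
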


By the inversion formula in Lemma \ref{lem:gtrudi} (2) and (3), we have

\begin{Cor}
$$
\mathcal T_{n}^{(3,u,u^2+2 v)}=(-1)^{n-1}\left|\begin{array}{ccccc}
\mathcal T_2^{(0,1,u)}&1&0&\cdots&0\\
2\mathcal T_3^{(0,1,u)}&\mathcal T_2^{(0,1,u)}&1&&\vdots\\
\vdots&&\ddots&&0\\
(n-1)\mathcal T_{n}^{(0,1,u)}&\mathcal T_{n-1}^{(0,1,u)}&\cdots&\mathcal T_2^{(0,1,u)}&1\\
n\mathcal T_{n+1}^{(0,1,u)}&\mathcal T_{n}^{(0,1,u)}&\cdots&\mathcal T_3^{(0,1,u)}&\mathcal T_2^{(0,1,u)}
\end{array}
\right|\,.
$$
\end{Cor}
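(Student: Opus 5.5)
The plan is to avoid the Bell-polynomial machinery of Lemma \ref{lem:gtrudi} altogether. Instead, I would derive a Newton-type convolution identity directly from the generating-function relation established just before the theorem, and then invert it with Cramer's rule. Put $a_n:=\mathcal T_{n+1}^{(0,1,u)}$ for $n\ge 0$, so $a_0=\mathcal T_1^{(0,1,u)}=1$, $a_1=\mathcal T_2^{(0,1,u)}$, and in general $a_j=\mathcal T_{j+1}^{(0,1,u)}$. Write $G(t):=\sum_{n\ge0}a_n t^n$ and $s_k:=\mathcal T_k^{(3,u,u^2+2v)}$. The computation preceding the theorem shows that, for $(a,b,c)=(3,u,u^2+2v)$, we have $G(t)=\exp(\mathcal F(t))$ with $\mathcal F(t)=\sum_{m\ge1}s_m t^m/m$.

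Differentiating gives $G'(t)=\mathcal F'(t)G(t)$, where $\mathcal F'(t)=\sum_{k\ge1}s_k t^{k-1}$. Comparing coefficients of $t^{m-1}$ yields, for every $m\ge1$,
$$
m\,a_m=\sum_{k=1}^{m}a_{m-k}\,s_k\,.
$$
For $m=1,\dots,n$ this is a lower-triangular linear system in the unknowns $s_1,\dots,s_n$. Its coefficient matrix is $M=(a_{m-k})_{1\le m,k\le n}$, with $a_j=0$ for $j<0$. All diagonal entries equal $a_0=1$, so $\det M=1$.

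By Cramer's rule, $s_n=\det M_n$, where $M_n$ is obtained from $M$ by replacing its last column with the vector $(m a_m)_{m=1}^n$. I would then move this last column to the front by $n-1$ adjacent transpositions, which produces the factor $(-1)^{n-1}$. After the move, row $m$ reads $m a_m, a_{m-1}, a_{m-2}, \dots, a_{m-n+1}$. Thus the first row is $(a_1,1,0,\dots,0)$ and the last row is $(n a_n, a_{n-1},\dots,a_1)$. Substituting $a_j=\mathcal T_{j+1}^{(0,1,u)}$ gives exactly the Hessenberg determinant in the statement, with first-column entries $k\,\mathcal T_{k+1}^{(0,1,u)}$ and the remaining columns carrying $\mathcal T^{(0,1,u)}$ shifted down, including the superdiagonal $1$'s coming from $a_0=1$.

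The only delicate points are bookkeeping ones. The first is checking that the sign from the column permutation is indeed $(-1)^{n-1}$. The second is checking that the indices in the moved matrix line up with those displayed in the corollary, in particular that the entry in the last row and last column is $\mathcal T_2^{(0,1,u)}$ and not $\mathcal T_1^{(0,1,u)}$. I expect this index alignment to be the main (mild) obstacle.

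A quick sanity check at $n=1$ confirms the set-up: the formula gives $s_1=a_1=\mathcal T_2^{(0,1,u)}=u$, which matches $\mathcal T_1^{(3,u,u^2+2v)}=u$. Equivalently, the whole argument is the inversion statement (2)$\Leftrightarrow$(3) of Lemma \ref{lem:gtrudi} specialized to $x_m=(m-1)!\,s_m$; the direct derivation above simply makes that inversion explicit.
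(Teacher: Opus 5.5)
Your argument is correct. The bookkeeping you flagged also checks out: after moving the column $(m a_m)_{m=1}^n$ to the front, row $m$ reads $(m a_m, a_{m-1},\dots,a_{m-n+1})$. So the $(n,n)$ entry is $a_1=\mathcal T_2^{(0,1,u)}$, and the $n-1$ adjacent transpositions contribute exactly $(-1)^{n-1}$.

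Your route differs from the paper's in where the work is done. The paper does no computation at this point. It takes the preceding Theorem, i.e.\ statement (1)/(2) of Lemma~\ref{lem:gtrudi} with $a_m=\mathcal T_m^{(3,u,u^2+2v)}$ and $b_m=\mathcal T_{m+1}^{(0,1,u)}$. It then invokes the equivalence with (3) from that lemma, which is quoted from \cite{Ko25m2} without proof.

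You instead start from the generating-function identity $\sum_{n\ge0}\mathcal T_{n+1}^{(0,1,u)}t^n=\exp(\mathcal F(t))$. The logarithmic derivative gives the Newton-type recursion $m a_m=\sum_{k=1}^m a_{m-k}s_k$, and you solve this unitriangular system by Cramer's rule. In effect you prove the needed direction (1)$\Rightarrow$(3) of the lemma in this special case, bypassing both the Bell polynomials and the Hessenberg determinant of the Theorem.

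Your version is self-contained. It also shows why the weights $1,2,\dots,n$ sit only in the first column, and why the superdiagonal consists of $1$'s (they are $a_0=\mathcal T_1^{(0,1,u)}=1$). The paper's version is a one-line appeal that packages the Theorem and the Corollary as two sides of a single cited equivalence. Both arguments transfer unchanged to the $\ell$-step corollary at the end of the paper.
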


\noindent
{\it Remark.}
When $(u,v,w)=(1,1,1)$, we have
\begin{align*}
T_{n+1}&=\frac{1}{n!}\mathbf Y_n(T_1^{(3,1,3)},1!T_2^{(3,1,3)},2!T_3^{(3,1,3)},3!T_4^{(3,1,3)},\dots)\\
&=\frac{1}{n!}\left|\begin{array}{ccccc}
T_1^{(3,1,3)}&-1&0&\cdots&0\\
T_2^{(3,1,3)}&T_1^{(3,1,3)}&-2&&\vdots\\
\vdots&&\ddots&&0\\
T_{n-1}^{(3,1,3)}&T_{n-2}^{(3,1,3)}&\cdots&T_1^{(3,1,3)}&-n+1\\
T_{n}^{(3,1,3)}&T_{n-1}^{(3,1,3)}&\cdots&T_2^{(3,1,3)}&T_1^{(3,1,3)}\\
\end{array}
\right|\,.
\end{align*}
By the inversion formula, we have
$$
T_{n}^{(3,1,3)}=(-1)^{n-1}\left|\begin{array}{ccccc}
T_2&1&0&\cdots&0\\
2 T_3&T_2&1&&\vdots\\
\vdots&&\ddots&&0\\
(n-1)T_{n}&T_{n-1}&\cdots&T_2&1\\
n T_{n+1}&T_{n}&\cdots&T_3&T_2\\
\end{array}
\right|\,.
$$
We remark that the sequence $T_{n}^{(3,1,3)}$ appears in \cite[A001644]{oeis} and is given as
$$
\{T_{n}^{(3,1,3)}\}_{n\ge 0}=3, 1, 3, 7, 11, 21, 39, 71, 131, 241, 443, 815, 1499, 2757, 5071, \dots\,.
$$
This sequence is also known as Tribonacci-Lucas sequence (\cite{RS15}) or $3$-step Lucas sequence (\cite[Definition 13]{SW11}).
\medskip

One might think that the results mentioned above can be easily modified for other Tribonacci numbers with different initial values, but the structure is not that simple.  This also implies that there is a strong relation between the original Tribonacci numbers $T_n=\mathcal T_n^{(0,1,1)}$ and the Tribonacci-Lucas numbers $\mathcal T_n^{(3,1,3)}$ like that of Fibonacci numbers and Lucas numbers.

\subsection{Bell polynomials}

The (complete exponential) Bell polynomial $\mathbf Y_n(x_1,x_2,\dots,x_n)$ is defined by
\begin{equation}
\exp\left(\sum_{m=1}^\infty x_m\frac{t^m}{m!}\right)=1+\sum_{n=1}^\infty\mathbf Y_n(x_1,x_2,\dots,x_n)\frac{t^n}{n!}\,.
\label{def:bell-p}
\end{equation}
It is expressed as
\begin{align*}
&\mathbf Y_n(x_1,x_2,x_3,\dots,x_n)\\
&=\sum_{k=1}^n\sum_{i_1+2 i_2+\cdots+(n-k+1)i_{n-k+1}=n\atop i_1+i_2+i_3+\cdots=k}\frac{n!}{i_1!i_2!\cdots i_{n-k+1}!}\\
&\qquad\qquad\times \left(\frac{x_1}{1!}\right)^{i_1}\left(\frac{x_2}{2!}\right)^{i_2}\cdots\left(\frac{x_{n-k+1}}{(n-k+1)!}\right)^{i_{n-k+1}}
\end{align*}
with $\mathbf Y_0=1$ (see, e.g., \cite[\S 3.3]{Comtet}).
In \cite{Ko26m1} and the related papers \cite{CKm,Ko25m2,KLm5,KPm4,KWm3}, by applying the theory of Bell polynomials (some fundamental theories can be found in \cite{MacDonald95}), we explicitly expressed the $q$-multiple zeta values specifically in cases involving general indices (powers) and a small number of summands using determinants.

For two sequences $a_0=1,a_1,a_2,\dots$ and $b_0=1,b_1,b_2,\dots$, we have the equivalent expressions \cite{Ko25m2}. Note that a different version can be seen in \cite[Lemma 5]{Ko26m1}.

\begin{Lem}
The following expressions are equivalent.
\begin{enumerate}
\item[(1)] $\displaystyle b_m=\frac{1}{m!}\mathbf Y_m\bigl(a_1,1!a_2,2!a_3,3!a_4,\dots,(m-1)!a_m)$
\item[(2)] $\displaystyle b_m=\frac{1}{m!}\left|\begin{array}{ccccc}
a_1&-1&0&\cdots&0\\
a_2&a_1&-2&&\vdots\\
\vdots&&\ddots&&0\\
a_{m-1}&a_{m-2}&\cdots&a_1&-m+1\\
a_m&a_{m-1}&\cdots&a_2&a_1\\
\end{array}
\right|$
\item[(3)] $\displaystyle a_n=(-1)^{n-1}\left|\begin{array}{ccccc}
b_1&1&0&\cdots&0\\
2 b_2&b_1&1&&\vdots\\
\vdots&&\ddots&&0\\
(n-1)b_{n-1}&b_{n-2}&\cdots&b_1&1\\
n b_n&b_{n-1}&\cdots&b_2&b_1\\
\end{array}
\right|$
\end{enumerate}
\label{lem:gtrudi}
\end{Lem}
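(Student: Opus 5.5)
The plan is to route all three statements through a single convolution recurrence. For $a_1,a_2,\dots$ and $b_0=1,b_1,\dots$ this recurrence is
\begin{equation}
m\,b_m=\sum_{k=1}^{m}a_k\,b_{m-k}\qquad(m\ge 1).
\label{eq:plan-rec}
\end{equation}
Given the $a_k$, it determines the $b_m$ uniquely. Conversely, since $b_0=1$, given the $b_m$ it determines the $a_n$ uniquely. So it suffices to show that each of (1), (2), (3) is equivalent to (\ref{eq:plan-rec}).

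\emph{(1) $\Leftrightarrow$ (\ref{eq:plan-rec}).} With $x_m=(m-1)!a_m$, the exponent in (\ref{def:bell-p}) becomes $A(t):=\sum_{m\ge1}a_m t^m/m$. So (1) says exactly that $B(t):=1+\sum_{m\ge1}b_m t^m=\exp(A(t))$. Among formal power series with constant term $1$ this is equivalent to $B'(t)=A'(t)B(t)$. Comparing coefficients of $t^{m-1}$ on both sides gives precisely (\ref{eq:plan-rec}).

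\emph{(2) $\Leftrightarrow$ (\ref{eq:plan-rec}).} Let $D_m$ be the determinant in (2), with $D_0=1$. I will use the standard expansion of a lower Hessenberg determinant along its first column, or equivalently its last row. For a lower Hessenberg matrix with superdiagonal entries $-1,-2,\dots,-(m-1)$ and Toeplitz lower part $a_1,a_2,\dots$, this expansion gives
$$
D_m=\sum_{k=1}^{m}a_k\,\frac{(m-1)!}{(m-k)!}\,D_{m-k}.
$$
Here each term carries the sign $(-1)^{k-1}$ from the cofactor and a factor $(-1)^{k-1}$ from the product of the $k-1$ negative superdiagonal entries $-(m-k+1),\dots,-(m-1)$, so the signs cancel. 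Setting $D_m=m!\,b_m$ and dividing by $(m-1)!$ turns this into (\ref{eq:plan-rec}). An induction on $m$ then gives the equivalence, because both $D_m/m!$ and $b_m$ are determined by the same recurrence from the same initial value $1$.

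\emph{(3) $\Leftrightarrow$ (\ref{eq:plan-rec}).} Rewrite (\ref{eq:plan-rec}) for $n=1,\dots,N$ as a linear system in the unknowns $a_1,\dots,a_N$:
$$
\sum_{k=1}^{n}b_{n-k}a_k=n b_n \qquad (1\le n\le N).
$$
Its coefficient matrix is lower triangular Toeplitz with diagonal $b_0=1$, so it has determinant $1$. By Cramer's rule, $a_N$ is the determinant of this matrix with its last column replaced by $(b_1,2b_2,\dots,Nb_N)^T$. Moving that column to the front costs the sign $(-1)^{N-1}$ and yields exactly the Hessenberg determinant in (3). For the converse, the same expansion argument shows that the right-hand side of (3) satisfies (\ref{eq:plan-rec}), and uniqueness closes the loop.

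The only step I expect to need care is the sign and factorial bookkeeping in the Hessenberg expansions. This is the cancellation of $(-1)^{k-1}$ against the negative superdiagonal in (2), and the column permutation sign in (3). I would verify both against the cases $m=1,2$ before writing the general induction.
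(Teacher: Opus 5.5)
Your argument is correct. The paper gives no proof of this lemma: it is quoted from \cite{Ko25m2}, with a variant in \cite[Lemma 5]{Ko26m1}. The label, the Trudi entry in the bibliography, and the explicit partition formula for $\mathbf Y_n$ displayed just before the lemma suggest that the cited source argues via Trudi's formula. That would mean expanding the Hessenberg determinants as sums over partitions and matching them with the multinomial expression of $\mathbf Y_n$. Your route is different. You make the convolution recurrence $m b_m=\sum_{k=1}^m a_k b_{m-k}$, equivalently $B'=A'B$ with $B=\exp A$, the hub, and check (1), (2), (3) against it by a logarithmic derivative, a cofactor expansion, and Cramer's rule. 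This is shorter and self-contained. It is also the mechanism the paper actually relies on when it applies the lemma in the subsection \emph{More on Tribonacci determinants}, where it computes $\mathcal F'(t)$ and compares it with $-\frac{d}{dt}\log(1-ut-vt^2-wt^3)$. What a Trudi-type proof adds is an explicit multinomial expansion of each determinant, which your argument neither produces nor needs.

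One piece of bookkeeping needs fixing. For the matrix in (2), expanding along the first column is \emph{not} equivalent to expanding along the last row. The superdiagonal $-1,-2,\dots,-(m-1)$ is not constant, so deleting row $k$ and column $1$ leaves a trailing block with superdiagonal $-(k+1),\dots,-(m-1)$, which is not a copy of $D_{m-k}$. For $m=3$, $k=1$ that cofactor is $a_1^2+2a_2$, whereas $D_2=a_1^2+a_2$. Only the last-row expansion yields $D_m=\sum_k a_k\frac{(m-1)!}{(m-k)!}D_{m-k}$. Your sign count, using the $k-1$ entries $-(m-k+1),\dots,-(m-1)$, is in fact the last-row computation, so just drop the first-column alternative.

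The same applies to the converse of (3): the non-Toeplitz first column $(b_1,2b_2,\dots)$ again forces a last-row expansion. Alternatively, you can skip that second expansion. The system $\sum_{k\le n}b_{n-k}a_k=nb_n$ is lower unitriangular, so its first $n$ equations determine $a_1,\dots,a_n$ uniquely and independently of $N$. Cramer's rule then identifies the $n$th coordinate of that unique solution with the right-hand side of (3), giving both directions at once.
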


\subsection{Fibonacci $\ell$-step numbers}

The results achieved with the aforementioned Tribonacci numbers may potentially be replicated with sequences involving a greater number of terms, that is, the sequences of Tetranacci, Pentanacci, and hexanacci numbers.
For the sake of simplicity, the discussion shall be limited to cases satisfying a recurrence relation in which all coefficients are equal to one.

An $\ell$-step Fibonacci sequence $\{F_n^{(\ell)}\}_{n\ge 0}$ is defined by
\begin{equation}
F_n^{(\ell)}=F_{n-1}^{(\ell)}+F_{n-2}^{(\ell)}+\cdots+F_{n-\ell}^{(\ell)}\quad(n\ge 3)
\label{def:fibostep}
\end{equation}
with $F_n^{(\ell)}=0$ ($n\le 0$) and $F_1^{(\ell)}=F_2^{(\ell)}=1$.  More generally, if the sequence $\{a_n\}_n$ satisfies the recurrence relation
\begin{equation}
a_n=a_{n-1}+a_{n-2}+\cdots+a_{n-\ell}
\label{def:gfibostep}
\end{equation}
with arbitrary initial terms, then its generating function is given by
\begin{equation}
\mathfrak G(t)=\frac{a_0+(a_1-a_0)t+(a_2-a_1-a_0)t^2+\cdots+(a_{\ell-1}-a_{\ell-2}-\cdots-a_0)t^{\ell-1}}{1-t-t^2-\cdots-t^{\ell}}\,.
\label{gf:gfibostep}
\end{equation}

By setting
$$
\mathfrak F(t):=\sum_{m=1}^\infty (m-1)!a_m\frac{t^m}{m!}
=\sum_{m=1}^\infty\frac{a_m}{m}t^m\,,
$$
we get
\begin{align*}
&\mathfrak F'(t)\\
&=\frac{1}{t}\sum_{m=1}^\infty a_m t^m=\frac{\mathfrak G(t)-a_0}{t}\\
&=\frac{1}{t}\left(\frac{a_0+(a_1-a_0)t+(a_2-a_1-a_0)t^2+\cdots+(a_{\ell-1}-a_{\ell-2}-\cdots-a_0)t^{\ell-1}}{1-t-t^2-\cdots-t^{\ell}}-a_0\right)\\
&=\frac{a_1+(a_2-a_1)t+(a_3-a_2-a_1)t^2+\cdots+(a_{\ell-1}-a_{\ell-2}-\cdots-a_1)t^{\ell-2}+a_0 t^{\ell-1}}{1-t-t^2-\cdots-t^{\ell}}\,.
\end{align*}
Since
$$
\frac{d}{d t}\log(1-t-t^2-\cdots-t^{\ell})=-\frac{1+2 t+3 t^2+\cdots+\ell t^{\ell-1}}{1-t-t^2-\cdots-t^{\ell}}\,,
$$
when $a_0=\ell$ and $a_j=2^j-1$ ($1\le j\le\ell-1$), we have
\begin{align*}
\exp\left(\mathcal F(t)\right)&=\exp\left(-\log(1-t-t^2-\cdots-t^{\ell})\right)\\
&=\frac{1}{1-t-t^2-\cdots-t^{\ell}}
=\sum_{n=0}^\infty F_{n+1}^{(\ell)}t^n\,.
\end{align*}

For simplicity, denote the sequence of $a_n$ in (\ref{def:gfibostep}) with initial values $a_0=\ell$ and $a_j=2^j-1$ ($1\le j\le\ell-1$) by $\mathfrak F_n$.
In conclusion, we obtain

\begin{theorem}
\begin{align*}
F_{n+1}^{(\ell)}&=\frac{1}{n!}\mathbf Y_n(\mathfrak F_1,1!\mathfrak F_2,2!\mathfrak F_3,3!\mathfrak F_4,\dots)\\
&=\frac{1}{n!}\left|\begin{array}{ccccc}
\mathfrak F_1&-1&0&\cdots&0\\
\mathfrak F_2&\mathfrak F_1&-2&&\vdots\\
\vdots&&\ddots&&0\\
\mathfrak F_{n-1}&\mathfrak F_{n-2}&\cdots&\mathfrak F_1&-n+1\\
\mathfrak F_{n}&\mathfrak F_{n-1}&\cdots&\mathfrak F_2&\mathfrak F_1\\
\end{array}
\right|\,.
\end{align*}
\end{theorem}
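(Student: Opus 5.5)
The plan is to reduce the statement to the identity $\exp(\mathfrak F(t))=\sum_{n\ge0}F_{n+1}^{(\ell)}t^n$, where $\mathfrak F(t)=\sum_{m\ge1}\mathfrak F_m t^m/m$, and then read off both equalities from the definition (\ref{def:bell-p}) and Lemma \ref{lem:gtrudi}.

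First I verify the numerator of $\mathfrak F'(t)$ computed above. With $a_0=\ell$ and $a_j=2^j-1$ for $1\le j\le\ell-1$, I need
$$
a_1+(a_2-a_1)t+\cdots+(a_{\ell-1}-a_{\ell-2}-\cdots-a_1)t^{\ell-2}+a_0t^{\ell-1}=1+2t+\cdots+\ell t^{\ell-1}.
$$
The coefficient of $t^{j-1}$ for $1\le j\le\ell-1$ is $a_j-(a_{j-1}+\cdots+a_1)$. Since $\sum_{i=1}^{j-1}(2^i-1)=2^j-2-(j-1)$, this equals $(2^j-1)-(2^j-1-j)=j$. The top coefficient is $a_0=\ell$.

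This algebraic check is the only real obstacle. I also need to confirm that the general generating-function formula (\ref{gf:gfibostep}) is correct, i.e.\ that the numerator coefficient of $t^j$ is $a_j-a_{j-1}-\cdots-a_0$ for $j\le\ell-1$. That follows by multiplying $\mathfrak G(t)$ by $1-t-\cdots-t^\ell$ and using (\ref{def:gfibostep}) for $n\ge\ell$. Subtracting $a_0$ and dividing by $t$ then produces the displayed numerator, with constant term $a_0$ shifted into the $t^{\ell-1}$ slot.

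With this, $\mathfrak F'(t)=-\frac{d}{dt}\log(1-t-\cdots-t^\ell)$. Since $\mathfrak F(0)=0$ and $\log 1=0$, integration gives $\mathfrak F(t)=-\log(1-t-\cdots-t^\ell)$. Hence $\exp(\mathfrak F(t))=(1-t-\cdots-t^\ell)^{-1}$. This equals $\sum_{n\ge0}F_{n+1}^{(\ell)}t^n$, because $F^{(\ell)}_{n+1}$ satisfies the $\ell$-step recurrence with $F_1^{(\ell)}=1$, $F_2^{(\ell)}=1$ and $F_n^{(\ell)}=0$ for $n\le0$. Equivalently, this is (\ref{gf:gfibostep}) applied to the shifted sequence, whose initial data make the numerator $1$.

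Finally, writing $\mathfrak F(t)=\sum_m x_m t^m/m!$ with $x_m=(m-1)!\mathfrak F_m$, definition (\ref{def:bell-p}) gives $F_{n+1}^{(\ell)}=\frac1{n!}\mathbf Y_n(\mathfrak F_1,1!\mathfrak F_2,2!\mathfrak F_3,\dots)$. The determinant form then follows from the equivalence (1)$\Leftrightarrow$(2) of Lemma \ref{lem:gtrudi}, applied with $a_m=\mathfrak F_m$ and $b_m=F_{m+1}^{(\ell)}$.
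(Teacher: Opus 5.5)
Your proposal is correct and takes the same route as the paper. You compute $\mathfrak F'(t)$ from (\ref{gf:gfibostep}) and observe that the initial values $a_0=\ell$, $a_j=2^j-1$ turn the numerator into $1+2t+\cdots+\ell t^{\ell-1}$, so $\exp(\mathfrak F(t))=(1-t-\cdots-t^{\ell})^{-1}=\sum_{n\ge0}F_{n+1}^{(\ell)}t^n$; you then read off both equalities from (\ref{def:bell-p}) and Lemma~\ref{lem:gtrudi}. The paper only asserts the numerator identity and (\ref{gf:gfibostep}), and you actually check them (the coefficient of $t^{j-1}$ equals $j$ because $\sum_{i=1}^{j-1}(2^i-1)=2^j-1-j$).
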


By the inversion formula, we have

\begin{Cor}
$$
\mathfrak F_n=(-1)^{n-1}\left|\begin{array}{ccccc}
F_2^{(\ell)}&1&0&\cdots&0\\
2 F_3^{(\ell)}&F_2^{(\ell)}&1&&\vdots\\
\vdots&&\ddots&&0\\
(n-1)F_n^{(\ell)}&F_{n-1}^{(\ell)}&\cdots&F_2^{(\ell)}&1\\
n F_{n+1}^{(\ell)}&F_n^{(\ell)}&\cdots&F_3^{(\ell)}&F_2^{(\ell)}\\
\end{array}
\right|\,.
$$
\end{Cor}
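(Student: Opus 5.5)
The corollary follows from the preceding theorem and the equivalence of (2) and (3) in Lemma \ref{lem:gtrudi}, used in the same way as for the Tribonacci-Lucas corollary. We take $a_m:=\mathfrak F_m$ for $m\ge1$ and $b_m:=F_{m+1}^{(\ell)}$ for $m\ge0$, so that $b_0=F_1^{(\ell)}=1$, as Lemma \ref{lem:gtrudi} requires. The preceding theorem states exactly that
$$
b_n=\frac{1}{n!}\mathbf Y_n\bigl(a_1,1!a_2,2!a_3,\dots,(n-1)!a_n\bigr)
$$
for every $n\ge1$, which is statement (1) of Lemma \ref{lem:gtrudi}. It is also statement (2), the determinant form, for every $n$.

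Since the three statements in Lemma \ref{lem:gtrudi} are equivalent for all $m$ simultaneously, statement (3) holds for every $n\ge1$:
$$
a_n=(-1)^{n-1}\det\bigl(\text{Hessenberg matrix with first column } b_1,2b_2,\dots,nb_n\bigr).
$$
It remains to substitute $b_j=F_{j+1}^{(\ell)}$. The first column becomes $F_2^{(\ell)},2F_3^{(\ell)},\dots,nF_{n+1}^{(\ell)}$. The Toeplitz part has diagonals $b_1=F_2^{(\ell)}$, $b_2=F_3^{(\ell)}$, and so on. This is exactly the determinant displayed in the corollary.

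As a sanity check on the hypothesis, I would verify that the theorem really holds, i.e.\ that $\exp\bigl(\sum_{m\ge1}\mathfrak F_m t^m/m\bigr)=1/(1-t-\cdots-t^\ell)$. The derivation before the theorem needs the numerator of $\mathfrak F'(t)$ to equal $1+2t+\cdots+\ell t^{\ell-1}$. With $a_0=\ell$ and $a_j=2^j-1$, the coefficient $a_j-a_{j-1}-\cdots-a_1$ for $1\le j\le\ell-1$ equals
$$
(2^j-1)-\sum_{i=1}^{j-1}(2^i-1)=(2^j-1)-(2^j-2-(j-1))=j,
$$
and the top coefficient is $a_0=\ell$. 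So the hypothesis is consistent.

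The only delicate point is the direction of Lemma \ref{lem:gtrudi}. We need (2)$\Rightarrow$(3) for all $n$, and this holds because both statements are equivalent to the single generating-function identity
$$
\exp\Bigl(\sum_m a_m t^m/m\Bigr)=\sum_m b_m t^m.
$$
Taking the logarithmic derivative of this identity gives $\sum_{k=1}^n a_k b_{n-k}=n b_n$. Solving this triangular system for $a_n$ by Cramer's rule yields the determinant in (3). I expect no real obstacle beyond keeping the indices aligned: the $b$'s are shifted by one relative to the $F^{(\ell)}$'s.
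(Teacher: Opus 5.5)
Your proposal is correct and follows the paper's own route. The paper also obtains this corollary by applying the inversion (2)$\Leftrightarrow$(3) of Lemma~\ref{lem:gtrudi} to the preceding theorem with $a_m=\mathfrak F_m$ and $b_m=F_{m+1}^{(\ell)}$, and your additional checks are sound: the numerator coefficients do reduce to $j$, and the identity $\sum_{k=1}^n a_k b_{n-k}=n b_n$ solved by Cramer's rule does give (3).
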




The work of T.K. was partly supported by JSPS KAKENHI Grant Number 24K22835.


\medskip
\noindent
{\bf Availability of data and materials}\\\noindent
Not applicable.



\begin{thebibliography}{99}

\bibitem{Cameron}
P. J. Cameron, {\em
Some sequences of integers},
Discret. Math. {\bf 75} (1989), 89--102.

\bibitem{CKm}
T. Chatterjee and T. Komatsu, {\em
Special values of a $q$-multiple t-function of general level at roots of unity},
Mediterr. J. Math. {\bf 22} (2025), No.8, Article 226, 20 p.

\bibitem{Comtet}
L. Comtet, {\em
Advanced Combinatorics},
Dordrecht, D. Reidel, 1974.

\bibitem{GS20}
T. Goy and M. Shattuck, {\em
Determinant identities for Toeplitz--Hessenberg matrices with tribonacci entries},Trans. Comb. {\bf 9} (2020), 89--109.

\bibitem{KKL22}
N. R. Kanasri, T. Komatsu and V. Laohakosol, {\em
Cameron's operator in terms of determinants and hypergeometric numbers},
Bol. Soc. Mat. Mex. {\bf 28} (2022), Paper No.9, 23 p.

\bibitem{KT10}
E. K{\i}l{\i}\c{c} and D. Ta\c{s}c{\i}, {\em
On the generalized Fibonacci and Pell sequences by Hessenberg matrices},
Ars Comb. {\bf 94} (2010), 161--174.

\bibitem{KTH10}
E. K{\i}l{\i}\c{c}, D. Ta\c{s}c{\i} and P. Haukkanen, {\em
On the generalized Lucas sequences by Hessenberg matrices},
Ars Comb. {\bf 95} (2010), 383--395.

\bibitem{Ko20}
T. Komatsu, {\em
Continued fraction expansions of the generating functions of Bernoulli and related numbers},
Indag. Math., New Ser. {\bf 31} (2020), No.4, 695--713.

\bibitem{Ko20b}
T. Komatsu, {\em
Fibonacci determinants with Cameron's operator},
Bol. Soc. Mat. Mex. {\bf 26} (2020), 841--863.

\bibitem{Ko21}
T. Komatsu, {\em
Several continued fraction expansions of generalized Cauchy numbers},
Bull. Malays. Math. Sci. Soc. (2) {\bf 44} (2021), No.4, 2425--2446.

\bibitem{Ko25a}
T. Komatsu, {\em
Fibonacci determinants, II},
Lith. Math. J. {\bf 65} (2025), No. 4, 564--584.
DOI: 10.1007/s10986-025-09691-1

\bibitem{Ko25m2}
T. Komatsu, {\em
Some explicit values of a $q$-multiple zeta-star function at roots of unity},
RAIRO, Theor. Inform. Appl. {\bf 59} (2025), Paper No.5, 14 p.

\bibitem{Ko26m1}
T. Komatsu, {\em
Some explicit values of a $q$-multiple zeta function at roots of unity},
J. Math. Anal. Appl. {\bf 555} (2026), No.2, Article 130065, 16 p.

\bibitem{KLm5}
T. Komatsu and F. Luca, {\em
Some explicit forms of special values of an alternating $q$-multiple $t$-function of general level at roots of unity},
Ramanujan J. {\bf 68} (2025), Article no.3, 25 p.

\bibitem{KPm4}
T. Komatsu and R. K. Pandey, {\em
Some explicit values of an alternative q-multiple zeta function at roots of unity},
Log. J. IGPL {\bf 33} (2025), No.6, Article ID jzaf075, 19 p.

\bibitem{KR18}
T. Komatsu and J. L. Ramirez, {\em
Some determinants involving incomplete Fubini numbers},
An. \c{S}tiin\c{t}. Univ. ``Ovidius'' Constan\c{t}a Ser. Mat. {\bf 26} (2018), No.3, 143--170.

\bibitem{KWm3}
T. Komatsu and T. Wang, {\em
Some explicit values of a q-multiple t-function at roots of unity},
Aequationes Math. {\bf 99} (2025), No. 5, 2377--2400.
DOI:10.1007/s00010-025-01235-9

\bibitem{KY26}
T. Komatsu and F. Yilmaz, {\em
New determinant expressions of Bernoulli, Euler and Tribonacci polynomials},
AIMS Math. {\bf 11} (2026), No. 1, 1021--1035.

\bibitem{Koshy1}
T. Koshy, {\em Fibonacci and Lucas Numbers with Applications},
Wiley, New York, 2001.

\bibitem{Koshy2}
T. Koshy, {\em
Fibonacci and Lucas numbers with applications. Vol. 2},
Pure Appl. Math.,
John Wiley \& Sons, Inc., Hoboken, NJ, 2019.

\bibitem{MacDonald95}
I. G. MacDonald, {\em
Symmetric Functions and Hall Polynomials},
2nd ed., Clarendon Press, Oxford, 1995.

\bibitem{RS15}
J. L. Ramirez and V. F. Sirvent, {\em
A generalization of the $k$-bonacci sequence from Riordan arrays},
Electron. J. Comb. {\bf 22} (2015), No. 1, Paper No. P1.38, 20 p.

\bibitem{SW11}
S. Saito, T Tanaka and N. Wakabayashi, {\em
Combinatorial remarks on the cyclic sum formula for multiple zeta values},
J. Int. Seq. {\bf 14} (2011), Article 11.2.4, 20 p.

\bibitem{oeis}
N. J. A. Sloane, {\em
The On-Line Encyclopedia of Integer Sequences},
available at oeis.org.  (2026).

\bibitem{TongronK22} 
Y. Tongron and T. Komatsu, {\em  
Explicit formulae for some sums of Trinomial coefficients},  
rog. Appl. Sci. Tech. (PAST) 10 (2020), 161--169. 
https://ph02.tci-thaijo.org/index.php/past/article/view/242916 

\bibitem{Trudi}
N. Trudi, {\em
Intorno ad alcune formole di sviluppo},
Rendic. dell' Accad. Napoli (1862), 135--143 (in Italian).

\end{thebibliography}
\end{document}